\documentclass[a4paper, 11pt]{article}
\usepackage{float}
\usepackage[T2A]{fontenc}
\usepackage[utf8x]{inputenc}
\usepackage[english]{babel}

\usepackage{latexsym}
\usepackage{geometry}
\usepackage{graphicx}
\usepackage{amsfonts}
\usepackage{amsmath}
\usepackage{amsthm}
\usepackage{amssymb}
\usepackage{enumerate}
\usepackage{mathtools}
\usepackage[noadjust]{cite}
\usepackage{centernot}
\usepackage{indentfirst}
\usepackage{xcolor}

\newtheorem{theorem}{Theorem}[section]
\newtheorem{corollary}[theorem]{Corollary}
\newtheorem{proposition}[theorem]{Proposition}
\newtheorem{lemma}[theorem]{Lemma}
\theoremstyle{definition}
\newtheorem{definition}[theorem]{Definition}

\newtheorem{notation}[theorem]{Notation}
\newtheorem{example}[theorem]{Example}
\newtheorem{remark}[theorem]{Remark}

\numberwithin{equation}{section}
\allowdisplaybreaks

\def\A{\mathcal{A}}

\def\R{\mathbb{R}}
\def\C{\mathbb{C}}
\def\F{\mathbb{F}}
\def\Okubo{\mathcal{O}}

\DeclareMathOperator{\n}{n}
\DeclareMathOperator{\s}{s}
\DeclareMathOperator{\tr}{tr}
\DeclareMathOperator{\spn}{span}

\DeclareMathOperator{\D}{d}
\DeclareMathOperator{\diam}{diam}

\DeclareMathOperator{\algop}{alg}
\DeclareMathOperator{\chrs}{char}

\newcommand\scpr[2]{\n ( #1, #2 )}
\newcommand\liepr[2]{[ #1, #2 ]}
\newcommand\alg[1]{\algop \langle #1 \rangle}

\newcommand{\frsl}{{\mathfrak{sl}}}

\providecommand{\keywords}[1]{\textbf{Keywords:} #1}
\providecommand{\msc}[1]{\textbf{MSC 2020:} #1}

\newcommand\freefootnote[1]{%
    \bgroup
    \renewcommand\thefootnote{\fnsymbol{footnote}}%
    \renewcommand\thempfootnote{\fnsymbol{mpfootnote}}%
    \footnotetext[0]{#1}%
    \egroup
}

\begin{document}

\title{Commuting graphs of Okubo algebras}
\author{
Svetlana Zhilina$^{a,b}$ and Danil Pavlinov$^{a,b}$
}
\date{\small \em
$^a$Department of Mathematics and Mechanics, Lomonosov Moscow State\\ University, Moscow, 119991, Russia\\
$^b$Moscow Center of Fundamental and Applied Mathematics, Moscow, 119991, Russia
}

\maketitle

\begin{abstract}
Commuting graphs of Okubo algebras are considered, and the problem of their connectivity is studied. The commuting graph of a pseudo-octonion algebra $P_8(\F)$ over a field~$\F$, $\chrs \F \neq 3$, that contains a primitive cubic root of unity is shown to be isomorphic to the commuting graph of the matrix algebra $M_3(\F)$. As a consequence, if the field~$\F$ is algebraically closed, then the diameter of the commuting graph for the unique Okubo algebra over~$\F$ equals~$4$. It is shown that the commuting graph of the real division Okubo algebra is connected, and its diameter also equals~$4$. The proof of this result relies on the fact that, given any two idempotents in an arbitrary Okubo algebra, the intersection of their centralizers is always nonzero.
\end{abstract}

\keywords{Okubo algebras, composition algebras, pseudo-octonions, relation graphs, commuting graph.}

\msc{05C25, 17A75.}

\freefootnote{The authors' research was supported by the Ministry of Science and Higher Education of the Russian Federation as part of the program of the Moscow Center of Fundamental and Applied Mathematics under agreement No. 075-15-2025-345.}

\freefootnote{Email addresses: \texttt{s.a.zhilina@gmail.com} (Svetlana Zhilina), \texttt{pavlinov.d.aa@gmail.com} (Danil Pavlinov)}

\section{Introduction}

The study of binary relations on algebraic structures leads naturally to consideration of the corresponding relation graphs. In the case of algebras over a field, the most widely studied graphs by now are commuting graphs, orthogonality graphs, and zero divisor graphs. The current paper is devoted to the commuting graphs of an important class of non-associative algebras, namely, Okubo algebras.

The definition of the commuting graph of an arbitrary algebra $\A$ over a field~$\F$ (possibly noncommutative or non-associative) is closely related to the definition of the commutative center. The {\em commutative center} of an algebra $\A$ is defined as the set of those elements in~$\A$ which commute with all elements:
\[
    C_\A = \big\{ c \in \A \: | \: cb=bc \quad \forall \, b \in \A \big\}.
\]

\begin{notation}
For any subset $X$ of a linear space $V$ over~$\F$ we denote the set of lines passing through nonzero elements of~$X$ by
\[
    \mathbb{P}(X) = \{ \mathbb{F} x \; | \; x \in X \setminus \{ 0 \} \}.
\]
\end{notation}
	
\begin{definition} \label{definition:graphs}
The {\em commuting graph} $\Gamma_C(\A)$ of an algebra $\A$ over a field~$\F$ is the graph whose vertices correspond to the elements of the set
\[
    \mathbb{P}(\A/C_\A) = \{ [a] = \mathbb{F}a + C_{\A} \; | \; a \in \A \setminus C_\A \},
\]
and two distinct vertices $[a]$ and $[b]$ are adjacent if and only if $ab = ba$.
\end{definition}

Henceforth, when speaking about the vertices of the graph defined above, we will not distinguish between a nonzero element $a$ and the corresponding vertex $[a] = \F a + C_{\A}$. We will use the notation $a \leftrightarrow b$ to indicate that the vertices $a$ and $b$ commute (i.e., either coincide or are adjacent). Recall that, in a graph $\Gamma$, $\D(a,b)$ denotes the {\em distance} between two vertices $a$ and $b$, and $\diam(\Gamma) = \sup\limits_{a,b \in \Gamma} \D(a,b)$ is the {\em diameter} of $\Gamma$.

The aim of the current paper is to study the commuting graphs of Okubo algebras, namely, to resolve the question of their connectivity and to compute their diameters. We will particularly focus on the case of the pseudo-octonion algebra \(P_8(\F)\) over a field~$\F$, $\chrs \F \neq 3$, containing a primitive cube root of unity, and also on the real division Okubo algebra --- the so-called real pseudo-octonions. In particular, if the field~$\F$ is algebraically closed and $\chrs \F \neq 3$, then the pseudo-octonion algebra is the unique Okubo algebra over~$\F$ up to isomorphism.

The main distinguishing feature of the above cases is a close connection with matrix algebras, whose commuting graphs have been actively studied by various authors~\cite{Shitov, Akbari, Akbari2, Akbari3, DKO12, Shitov16}. At present, the main results on the diameter of $\Gamma_C(M_n(\F))$ --- the commuting graph of the algebra of $n \times n$ matrices over~$\F$ --- are as follows. Over any field~$\F$, the graph $\Gamma_C(M_2(\F))$ is disconnected~\cite[Remark~8]{Akbari3}. By~\cite[Theorems~3 and~17]{Akbari}, if for some $n \geq 3$ the graph $\Gamma_C(M_n(\F))$ is connected, then $4 \leq \diam \Gamma_C(M_n(\F)) \leq 6$. If the field~$\F$ is algebraically closed and $n \geq 3$, then $\Gamma_C(M_n(\F))$ is connected, and its diameter equals~$4$, see \cite[Corollary~7]{Akbari}. A similar statement holds in the case when $\F = \R$ \cite{Shitov}. The first example of an algebra $M_n(\F)$ such that the largest possible diameter of the commuting graph is achieved was given in~\cite{Shitov16}. In~\cite{Morrison}, the graphs $\Gamma_C(M_n(\mathbb{Q}_p))$ were considered, where $\mathbb{Q}_p$ denotes the field of $p$-adic numbers, and it was shown that, for a prime $q \geq 7$, the diameter of $\Gamma_C(M_{2q}(\mathbb{Q}_2))$ equals~$6$. This provides the first example of a commuting graph with the largest possible diameter that does not use the axiom of choice. Still, it cannot be guaranteed that the graph $\Gamma_C(M_n(\F))$ is connected in general; for example, for every $n \geq 3$ the graph $\Gamma_C(M_n(\mathbb{Q}))$ of the matrix algebra over the rational numbers is disconnected, see~\cite[Remark~8]{Akbari2}. This result follows from Theorem~6 of~\cite{Akbari2}, which states that the graph $\Gamma_C(M_n(\F))$ is connected if and only if $n \geq 3$ and every field extension of~$\F$ of degree~$n$ contains a proper intermediate field.

The structure of this paper is as follows: In Section~2 we present the construction of Okubo algebras over fields of characteristic not three via pseudo-octonion algebras. This section also contains the main statements about Okubo algebras, their commutative center, and nonzero idempotents that are used throughout the text.

In Section~3 we consider the commuting graph of the pseudo-octonion algebra $P_8(\F)$ over a field~$\F$, $\chrs \F \neq 3$, which contains a primitive cube root of unity. By Proposition~\ref{proposition:commuting-graph-isomorphism}, the graph $\Gamma_C(P_8(\F))$ is isomorphic to the commuting graph $\Gamma_C(M_3(\F))$ of the algebra of $3 \times 3$ matrices over the field~$\F$. It follows that the diameter of the commuting graph of the unique Okubo algebra over an algebraically closed field~$\F$, $\chrs \F \neq 3$, equals~$4$. However, if the field~$\F$ is not algebraically closed, then the commuting graph $\Gamma_C(P_8(\F))$ can be disconnected, see Corollary~\ref{corollary:disconnected}.

In Section~4 we study paths between nonzero idempotents in the commuting graph of an Okubo algebra~$\Okubo$ over an arbitrary field~$\F$. By Corollary~\ref{corollary:idempotents-distance-two}, the distance between any two idempotents in $\Gamma_C(\Okubo)$ is at most~$2$, i.e., the intersection of their centralizers is nonzero.

Section~5 is devoted to the study of the commuting graph of the real pseudo-octonion algebra $\widetilde{P}_8(\R)$. By Lemma~\ref{lemma:path-to-idempotent}, in $\Gamma_C(\widetilde{P}_8(\R))$ any nonzero element is adjacent to an idempotent. Combining this statement with the results of Section~4, we obtain Theorem~\ref{theorem:real-commutativity-graph}, which states that the commuting graph $\Gamma_C(\widetilde{P}_8(\R))$ is connected, and its diameter equals~$4$, as in the case of an Okubo algebra over an algebraically closed field.
	
\section{Okubo algebras and main properties}

We use the definition of an Okubo algebra given in \cite[Section~1]{Eld99}. Assume first that the characteristic of the field~$\F$ is distinct from~$3$, and~$\F$ contains a primitive cube root of unity $\omega$. In particular, the second condition is satisfied if~$\F$ is algebraically closed. We set 
\[
\mu = \frac{1-\omega^2}{3}.
\]
Let $\frsl_3(\F)$ denote the Lie algebra of traceless $3 \times 3$ matrices over~$\F$. Define a new non-associative product ``\(*\)'' on $\frsl_3(\F)$ by
\begin{equation} \label{equation:product}
x * y = \mu x y + (1-\mu) y x - \frac{\tr(xy)}{3} I,
\end{equation}
where $I \in M_3(\F)$ is the identity matrix. The resulting algebra, denoted by $P_8(\F)$, is called the {\em pseudo-octonion algebra} over~$\F$.

Consider then the quadratic form
\begin{equation} \label{equation:norm-definition}
\n(x) = \frac{1}{6}\tr(x^2)
\end{equation}
on the algebra $P_8(\F)$. As noted in \cite[p.~101]{Eld99}, this definition makes sense even when~$\chrs \F = 2$. Indeed, by \cite[p.~1027, Lemma]{Fau88}, for any matrix $x \in M_3(\F)$ the identity
\[
\tr(x^2) = (\tr x)^2 - 2\,\s(x)
\]
holds, where $\s(x)$ is the quadratic form appearing as the coefficient of the characteristic polynomial
\begin{equation} \label{equation:ch}
\det(\lambda I - x) = \lambda^3 - \tr(x)\lambda^2 + \s(x)\lambda - \det(x).
\end{equation}
Thus $\tr(x^2)$ can be ``divided by 2'' for any $x \in \frsl_3(\F)$, and
\begin{equation} \label{equation:norm-division-by-3}
\n(x) = -\frac{1}{3}\s(x).
\end{equation}

\begin{definition}
Let $(\A, +, *)$ be an arbitrary (possibly non-unital and non-associative) algebra over a field~$\F$. Assume that $\A$ is equipped with a strictly nondegenerate quadratic form $\n(\cdot)$, i.e., the associated symmetric bilinear form $\scpr{a}{b} = \n(a+b) - \n(a) - \n(b)$ is nondegenerate on $\A$. Then $\A$ is called a {\em composition algebra} if the quadratic form $\n(\cdot)$ admits composition, i.e., $\n(a*b) = \n(a)\n(b)$ for all $a,b \in \A$. Here $\n(\cdot)$ is called the {\em norm}.
\end{definition}

\begin{definition} \label{definition:symmetric-composition}
A composition algebra $(\A,*,\n)$ is called {\em symmetric} if
\[ 
\scpr{x*y}{z} = \scpr{x}{y*z}
\]
for all $x,y,z \in \A$. 
\end{definition}
By \cite[Lemma~1.1]{EM93}, the algebra $P_8(\F)$ is a symmetric composition algebra.

\begin{theorem}[{\cite[Theorem~1]{Oku78D}}, {\cite[(34.1)]{KMRT98}}] \label{theorem:symmetric-flexible}
Let $(\A,*,\n)$ be a composition algebra. The following conditions are equivalent:
\begin{enumerate}[\rm (1)]
\item $\A$ is symmetric;
\item $(x*y)*x = x*(y*x) = \n(x)\,y$ for all $x,y \in \A$.
\end{enumerate}
\end{theorem}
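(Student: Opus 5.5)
The plan is to prove the two implications separately. Both rest on linearizing the composition identity $\n(x*y)=\n(x)\n(y)$ and on the nondegeneracy of the bilinear form $\scpr{\cdot}{\cdot}$. Linearizing in $y$ gives
\[
\scpr{x*y}{x*z}=\n(x)\scpr{y}{z},
\]
and linearizing in $x$ gives
\[
\scpr{x*y}{z*y}=\n(y)\scpr{x}{z}
\]
for all $x,y,z\in\A$. No characteristic assumption is needed for this, since $\scpr{\cdot}{\cdot}$ is defined as the polar form of $\n$.

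For (1)$\Rightarrow$(2), fix $x,y$ and take an arbitrary $z$. Using symmetry twice (once after commuting the arguments of $\n$),
\[
\scpr{(x*y)*x}{z}=\scpr{x*y}{x*z}=\n(x)\scpr{y}{z}.
\]
Since $\n$ is nondegenerate, $(x*y)*x=\n(x)y$. Similarly,
\[
\scpr{x*(y*x)}{z}=\scpr{z}{x*(y*x)}=\scpr{z*x}{y*x}=\n(x)\scpr{z}{y},
\]
where symmetry is applied as $\scpr{z}{x*w}=\scpr{z*x}{w}$ and then the second linearized identity is used. Hence $x*(y*x)=\n(x)y$.

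For (2)$\Rightarrow$(1), linearize the identity $(x*y)*x=\n(x)y$ in $x$:
\[
(x*y)*z+(z*y)*x=\scpr{x}{z}\,y,
\]
and similarly $x*(y*z)+z*(y*x)=\scpr{x}{z}\,y$. The aim is to show that $\scpr{x*y}{z}$ is invariant under the cyclic shift $(x,y,z)\mapsto(y,z,x)$. Together with the symmetry of $\n$, this gives the required identity $\scpr{x*y}{z}=\scpr{y*z}{x}=\scpr{x}{y*z}$.

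A natural route is to first handle the case where $x$ is anisotropic. From $(x*y)*x=\n(x)y$, right multiplication $R_x$ is invertible with inverse $\n(x)^{-1}L_x$, where $L_x$ is left multiplication by $x$. Then
\[
\scpr{x*y}{z}=\n(x)^{-1}\scpr{x*y}{x*(z*x)}=\scpr{y}{z*x}=\scpr{z*x}{y},
\]
using the first linearized identity. Applying the analogous manipulation with $(y*x)*y$ and $x*(y*x)$ shifts the variables around the cycle. For the final step from $\scpr{z*x}{y}$ to $\scpr{x}{y*z}$, write $y=\n(z)^{-1}z*(y*z)$ when $z$ is anisotropic, and check that the two sides agree.

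Once the identity holds for anisotropic arguments, it extends to all arguments. Both sides are polynomial (trilinear) in $x,y,z$, and the anisotropic vectors are Zariski-dense when $\F$ is infinite. For a finite field one can instead argue via the linearized identities directly.

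The main obstacle is this last step. The density argument fails over small finite fields, so a purely identity-based derivation from the linearized relations together with the composition identities would be preferable. I would first try to obtain $\scpr{x*y}{z}=\scpr{x}{y*z}$ by pairing the linearized form of (2) against $x*y$ and using the two linearized norm identities. If that becomes messy, I would simply cite \cite{Oku78D} and \cite{KMRT98}, as the paper does.
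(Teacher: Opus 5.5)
Your direction (1)$\Rightarrow$(2) is correct. For (2)$\Rightarrow$(1), your computation $\scpr{x*y}{z}=\scpr{y}{z*x}$ for $\n(x)\neq 0$ is also correct. The paper itself gives no argument; it only quotes the theorem from \cite{Oku78D} and \cite{KMRT98}, so there is nothing in the paper to match against.

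The gap is the passage from anisotropic to arbitrary elements. The statement is over an arbitrary field, but Zariski density is only available when $\F$ is infinite. For finite $\F$ you leave the step undone: ``argue via the linearized identities directly'' is never carried out, and falling back on a citation is not a proof. The cheaper observation that the identity is linear in $x$ does not close the gap either. With linearity it would suffice for the anisotropic vectors to span $\A$, but that fails in the hyperbolic plane over $\F_2$. For example, the para-Hurwitz algebra of $\F_2\times\F_2$ with $\n((a,b))=ab$ is a symmetric composition algebra whose only anisotropic vector is $(1,1)$.

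The missing idea is that a single anisotropic element $u$ suffices, and the linearized identities transfer the symmetry from $u$ to every element. Such a $u$ exists, since $\n\equiv 0$ would force $\scpr{\cdot}{\cdot}\equiv 0$. Take the two linearizations
\[
(u*y)*a+(a*y)*u=\scpr{u}{a}\,y, \qquad \scpr{u*y}{a*z}+\scpr{a*y}{u*z}=\scpr{u}{a}\scpr{y}{z}.
\]
Pair the first with $z$ and subtract the second to get
\[
\scpr{(u*y)*a}{z}-\scpr{u*y}{a*z}=\scpr{a*y}{u*z}-\scpr{(a*y)*u}{z}.
\]
The right-hand side vanishes by your own identity $\scpr{u*z}{v}=\scpr{z}{v*u}$ with $v=a*y$. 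Since $u*(w*u)=\n(u)w$, left multiplication by $u$ is onto, so $u*y$ runs over all of $\A$. Hence $\scpr{w*a}{z}=\scpr{w}{a*z}$ for all $w,a,z$. This works over every field and needs no density argument.

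Alternatively, extend scalars to an infinite field such as $\F(t)$. The identities in (2) and the composition identity persist there, because they hold together with all their linearizations. Symmetry then follows over $\F(t)$ by your density argument, and being a trilinear identity it descends to $\A$.
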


Any algebra $\A$ satisfying condition (2) of the theorem above is a {\em flexible algebra}. Namely, for all $x,y \in \A$ the flexibility identity holds:
\[
x*(y*x) = (x*y)*x.
\]

By linearizing the equality in Theorem~\ref{theorem:symmetric-flexible}(2), we obtain
\begin{equation} \label{equation:lin-of-symm}
(x*y)*z + (z*y)*x = x*(y*z) + z*(y*x) = \n(x,z)y.
\end{equation}

Now assume that the field~$\F$ is not necessarily algebraically closed, and let $\overline{\F}$ be its algebraic closure. An algebra $\A$ over~$\F$, $\chrs \F \neq 3$, is called an {\em Okubo algebra} if it is an~$\F$-form of $P_8(\overline{\F})$, i.e., 
\(
\A \otimes_{\F} \overline{\F} \;\cong\; P_8(\overline{\F}).
\)

\begin{example}
The first Okubo algebras to be constructed \cite{Oku78} were the complex algebra $P_8(\C)$ and its real form $\widetilde{P}_8(\R)$, the latter being defined as the set of traceless Hermitian matrices:
\begin{equation}
\widetilde{P}_8(\R) = \big\{ x \in M_3(\C) \: | \: x^* = x, \: \tr x = 0\big\}.
\end{equation}
The algebra $\widetilde{P}_8(\R)$ is called the {\em real pseudo-octonion algebra}.
\end{example}

The norm on $\widetilde{P}_8(\R)$ is positive definite \cite[p.~47]{Oku95}, and hence is {\em anisotropic}, i.e., if for some $a \in \widetilde{P}_8(\R)$ we have $\n(a) = 0$, then $a = 0$.

\begin{remark}
As shown in \cite[Theorem~6.6]{EM91}, there are exactly two Okubo algebras over~$\R$ up to isomorphism, namely, $\widetilde{P}_8(\R)$ and the split Okubo algebra. The norm on $\widetilde{P}_8(\R)$ is anisotropic, so it is a division algebra, whereas the norm on the split Okubo algebra is isotropic, and thus it contains zero divisors, see~\cite[Lemma~2.1]{EM91}.
\end{remark}

If $\chrs \F = 3$, then a different approach is needed to define an Okubo algebra. It was first defined in~\cite{OO81b} by presenting an explicit multiplication table, and in~\cite{EP96} a more general approach is offered, based on the ideas of Petersson's construction~\cite{Pet69}. In any case, it is universally true that any Okubo algebra over an arbitrary field~$\F$ is a symmetric composition algebra. We will use $\Okubo$ to denote an arbitrary Okubo algebra.

\begin{remark}
Consider an Okubo algebra $\Okubo$ over an arbitrary field~$\F$. Then $\Okubo_{\overline{\F}} = \Okubo \otimes_{\F} \overline{\F}$ is an Okubo algebra over $\overline{\F}$. Let $z \in C_{\Okubo}$ be an arbitrary element in the commutative center of $\Okubo$. For each $x \in \Okubo$, let $\widetilde{x} = x \otimes 1 \in \Okubo_{\overline{\F}}$. Since all elements of the form $\widetilde{x}$ commute with $\widetilde{z}$ and generate $\Okubo_{\overline{\F}}$ as a vector space over $\overline{\F}$, it follows that $\widetilde{z} \in C_{\Okubo_{\overline{\F}}}$. By~\cite[p.~5]{Eld15}, the algebra $\Okubo_{\overline{\F}}$ has a trivial commutative center, i.e., $C_{\Okubo_{\overline{\F}}} = \{0\}$. Hence we also have $C_{\Okubo} = \{0\}$ for an Okubo algebra $\Okubo$ over an arbitrary field, and the set of vertices of the commuting graph $\Gamma_C(\Okubo)$ is exactly the set of lines passing through the nonzero elements of~$\Okubo$.
\end{remark}

Note that any Okubo algebra $\Okubo$ satisfies the relation
\begin{equation} \label{equation:xxxx}
(x*x)*(x*x) = \scpr{x}{x*x}\, x - \n(x)\,x*x
\end{equation}
for all $x \in \Okubo$, see \cite[(34.3)]{KMRT98}. Together with Theorem~\ref{theorem:symmetric-flexible}(2), this implies that the subalgebra $\alg{x}$ generated by an element $x$ coincides with the linear span of $x$ and $x*x$, that is, $\spn \{x, x*x\}$.

\begin{proposition}[{\cite[(34.10)]{KMRT98}}] \label{proposition:norm-of-idempotent}
Let $e \in \Okubo$ be a nonzero idempotent. Then $\n(e) = 1$.
\end{proposition}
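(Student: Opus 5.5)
We need to show $\n(e)=1$ for a nonzero idempotent $e*e=e$. The approach is to apply Theorem~\ref{theorem:symmetric-flexible}(2) with $x=y=e$. Since every Okubo algebra is a symmetric composition algebra, that identity gives
\[
(e*e)*e = \n(e)\, e .
\]
The left-hand side is $e*e = e$, using $e*e=e$ twice. Hence $e = \n(e)\,e$, that is, $(1-\n(e))\,e = 0$. Because $e \neq 0$, this forces $\n(e)=1$.

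As a consistency check, the composition property gives $\n(e)=\n(e*e)=\n(e)^2$, so $\n(e)\in\{0,1\}$. The computation above rules out $\n(e)=0$, since in that case $e=(e*e)*e=0$, contradicting $e\neq 0$.

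There is no genuine obstacle here. The only point to note is that the identity in Theorem~\ref{theorem:symmetric-flexible}(2) is available for every Okubo algebra, including the characteristic~$3$ case, because every Okubo algebra is symmetric, as remarked earlier in the paper.
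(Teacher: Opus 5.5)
Your proof is correct and matches the paper's: both specialize the identity of Theorem~\ref{theorem:symmetric-flexible}(2) to $x=y=e$. The paper writes it as $\n(e)e = e*(e*e) = e*e = e$, using the $x*(y*x)$ side instead of your $(e*e)*e$, and then concludes $\n(e)=1$ from $e \neq 0$. Your extra consistency check via $\n(e)=\n(e)^2$ is valid but not needed.
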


\begin{proof}
We have $n(e)e = e*(e*e) = e*e = e$.
\end{proof}

\begin{proposition} \label{proposition:quadratic-subalgebra}
Let $x \in \Okubo \setminus \{0\}$. Assume that one of the following three conditions holds:
\begin{enumerate}[\rm (1)]
\item $\n(x) \neq \lambda^2$ for any $\lambda \in \F$,
\item $\n(x) = \lambda^2$ for some $0 \neq \lambda \in \F$, and the set $\F x$ does not contain a nonzero idempotent,
\item $\n(x) = 0$ and $x*x \neq 0$.
\end{enumerate}
Then $\dim (\alg{x}) = 2$, i.e., $x*x \notin \F x$. Otherwise, $\dim(\alg{x}) = 1$.
\end{proposition}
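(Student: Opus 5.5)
The plan is to show that $\dim(\alg{x}) = 1$ holds exactly when $x*x = \alpha x$ for some $\alpha \in \F$. We then determine which pairs $(x,\alpha)$ can occur and check that they are precisely the cases outside (1)--(3). By the remark following \eqref{equation:xxxx}, $\alg{x} = \spn\{x, x*x\}$, so the dimension is $1$ or $2$ according as $x*x \in \F x$ or not. It therefore suffices to show that $x*x \in \F x$ forces one of two situations: $\n(x)=0$ and $x*x=0$, or $\F x$ contains a nonzero idempotent. The converse is immediate. If $x*x=0$, then $\alg{x}=\F x$. If $e = \beta x$ is a nonzero idempotent, then $x*x = \beta^{-2} e*e = \beta^{-2} e = \beta^{-1}x \in \F x$. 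In the latter case Proposition~\ref{proposition:norm-of-idempotent} also gives $\n(x) = \beta^{-2}\n(e) = \beta^{-2}$, a nonzero square. So the three listed conditions are exactly the negation of ``$x*x\in\F x$''.

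For the main direction, suppose $x*x = \alpha x$. Then Theorem~\ref{theorem:symmetric-flexible}(2) gives
\[
\n(x)\,x = (x*x)*x = \alpha\, x*x = \alpha^2 x,
\]
and since $x \neq 0$ we get $\n(x) = \alpha^2$. Now split into two cases. If $\alpha = 0$, then $x*x = 0$ and $\n(x) = 0$, which is the excluded part of (3). If $\alpha \neq 0$, set $e = \alpha^{-1}x$. Then $e*e = \alpha^{-2}\alpha x = \alpha^{-1}x = e$, so $e$ is a nonzero idempotent in $\F x$. Also $\n(x) = \alpha^2$ is a nonzero square, so we are in the excluded part of (2).

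Conversely, to verify that each of (1)--(3) gives dimension $2$, argue by contraposition. If $x*x\in\F x$, the argument above shows that $\n(x)$ is a square, so (1) fails. If moreover $\n(x)\neq0$, then $\F x$ contains an idempotent, so (2) fails. If $\n(x)=0$, then $\alpha=0$ and $x*x=0$, so (3) fails.

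There is no real obstacle here. The only point needing care is the one-line computation $(x*x)*x = \n(x)x$, which relies on flexibility (Theorem~\ref{theorem:symmetric-flexible}). The argument also works in characteristic $3$, since only the symmetric composition identities are used.
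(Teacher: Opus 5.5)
Your proof is correct and follows essentially the same route as the paper. You assume $x*x=\alpha x$, multiply on the right by $x$ and use Theorem~\ref{theorem:symmetric-flexible}(2) to get $\n(x)=\alpha^2$, then split on whether $\alpha=0$ (forcing $x*x=0$) or $\alpha\neq 0$ (making $\alpha^{-1}x$ an idempotent), with the converse direction spelled out a bit more explicitly than the paper does.
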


\begin{proof}
Assume that $\dim(\alg{x}) = 1$ or, equivalently,
\begin{equation} \label{equation:linear-dependence}
x*x = \lambda x \quad \text{for some } \lambda \in \F.
\end{equation}
Multiplying Eq.~\eqref{equation:linear-dependence} by $x$ on the right, we obtain
\[
\n(x)x = (x*x)*x = \lambda\, x*x = \lambda^2 x,
\]
so $\n(x) = \lambda^2$.

Hence, if $\n(x)=0$, the condition \eqref{equation:linear-dependence} is equivalent to $x*x=0$. Otherwise, if $\n(x)=\lambda^2 \neq 0$, it is equivalent to the idempotency of the element $x/\lambda \in \F x$.
\end{proof}

\section{Commuting graph of the pseudo-octonion algebra}

In this section we consider the commuting graph of the pseudo-octonion algebra $P_8(\F)$ over a field~$\F$, $\operatorname{char}\F \neq 3$ and $\omega \in \F$. In particular, if the field~$\F$ is algebraically closed, then, by definition, $P_8(\F)$ is the unique Okubo algebra over~$\F$ (see also \cite[Theorem~5.1]{Eld97}, \cite[p.~4, Table~1]{Eld15} and \cite[pp.~3--4]{Eld23}). We show that the commuting graph $\Gamma_C(P_8(\F))$ is isomorphic to the graph $\Gamma_C(M_3(\F))$.

\begin{remark} \label{remark:commutativity}
Given any $a,b \in P_8(\F)$, consider the operations
\[
    \liepr{a}{b} = ab - ba, 
    \quad 
    \liepr{a}{b}_* = a*b - b*a.
\]
Then, as shown in \cite[(2.11)]{Oku78},
\(
    \liepr{a}{b}_* = (2\mu - 1)\,\liepr{a}{b}.
\)
It follows that $a$ and $b$ commute with respect to the matrix product if and only if they commute with respect to the ``$*$'' multiplication.
\end{remark}

\begin{proposition} \label{proposition:commuting-graph-isomorphism}
Let $\operatorname{char}\F \neq 3$ and $\omega \in \F$. The graphs $\Gamma_C(P_8(\F))$ and $\Gamma_C(M_3(\F))$ are isomorphic.
\end{proposition}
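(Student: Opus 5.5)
The plan is to build an explicit bijection between the vertex sets and then check, via Remark~\ref{remark:commutativity}, that it preserves adjacency in both directions.

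\textbf{Vertex sets.} The center of $M_3(\F)$ is $C_{M_3(\F)} = \F I$, so the vertices of $\Gamma_C(M_3(\F))$ are the sets $[a] = \F a + \F I$ with $a \notin \F I$. On the other side, $C_{P_8(\F)} = \{0\}$ by the remark in Section~2, so the vertices of $\Gamma_C(P_8(\F))$ are the lines $\F x$ with $0 \neq x \in \frsl_3(\F)$.

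\textbf{The map.} Since $\chrs \F \neq 3$, we have the decomposition $M_3(\F) = \F I \oplus \frsl_3(\F)$, given by $a \mapsto a - \frac{\tr a}{3} I$. Define
\[
\varphi \colon \mathbb{P}(P_8(\F)) \to \mathbb{P}(M_3(\F)/\F I), \qquad \varphi(\F x) = \F x + \F I .
\]
The steps are as follows.
\begin{enumerate}
\item $\varphi$ is well defined, since rescaling $x$ does not change the image.
\item $\varphi$ is surjective: for $a \notin \F I$, the traceless part $x = a - \frac{\tr a}{3} I$ is nonzero and $\F x + \F I = \F a + \F I$.
\item $\varphi$ is injective: suppose $\F x + \F I = \F y + \F I$ with $x, y$ traceless. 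Then $y = \alpha x + \beta I$, and taking traces gives $3\beta = 0$, so $\beta = 0$ because $\chrs \F \neq 3$. Moreover $\alpha \neq 0$, since $y \neq 0$. Hence $\F x = \F y$.
\end{enumerate}

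\textbf{Adjacency.} Vertices $[a]$ and $[b]$ of $\Gamma_C(M_3(\F))$ are adjacent exactly when $ab = ba$. This condition depends only on the classes, because adding scalar matrices does not affect commutation. So it suffices to compare commutation of the traceless representatives $x, y$. By Remark~\ref{remark:commutativity}, $[x,y]_* = (2\mu - 1)[x,y]$. Therefore $x * y = y * x$ holds if and only if $xy = yx$, provided $2\mu - 1 \neq 0$.

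\textbf{The one nontrivial check.} We must verify that $2\mu - 1 \neq 0$. We have
\[
2\mu - 1 = \frac{2 - 2\omega^2 - 3}{3} = -\frac{1 + 2\omega^2}{3}.
\]
Suppose $1 + 2\omega^2 = 0$. Then $\omega^2 = -\tfrac12$, which requires $\chrs \F \neq 2$. Combining this with $1 + \omega + \omega^2 = 0$ gives $\omega = -\tfrac12$. But then $\omega^2 = \tfrac14 = -\tfrac12$, so $3 = 0$, which contradicts $\chrs \F \neq 3$.

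If $\chrs \F = 2$, then $2\mu - 1 = -1 \neq 0$ directly, since $1 + 2\omega^2 = 1$.

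In all cases $2\mu - 1 \neq 0$, so $\varphi$ is a graph isomorphism.
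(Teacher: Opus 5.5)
Your proof is correct and follows the same route as the paper: you use the map $\F x \mapsto \F x + \F I$ on traceless representatives, and you transfer adjacency via Remark~\ref{remark:commutativity}. You also spell out bijectivity and check that $2\mu - 1 = -(1+2\omega^2)/3 \neq 0$, which the paper leaves implicit in that remark.
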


\begin{proof}
If $a \in M_3(\F)$, then $a - \tr(a)/3 \cdot I \in P_8(\F)$.  It is well known that the central elements of the matrix algebra $M_n(\F)$ are precisely the scalar matrices.  Moreover, adding a scalar matrix does not affect the commutativity relation.  Therefore, the map 
\begin{align*}
    \varphi: \mathbb{P}(P_8(\F)/C_{P_8(\F)}) &\rightarrow \mathbb{P}(M_3(\F)/C_{M_3(\F)}), \\
    \F a &\mapsto \F a + C_{M_3(\F)},
\end{align*}
induces an isomorphism of the commuting graphs.
\end{proof}

Akbari, Mohammadian, Radjavi, and Raja proved~\cite[Corollary 7]{Akbari} that if the field~$\mathbb{F}$ is algebraically closed and $n \geq 3$, then the commuting graph $\Gamma_C(M_n(\mathbb{F}))$ is connected, and its diameter equals~$4$.

\begin{remark} \label{remark:connection-btw-difinitions-of-commutativity-graphs}
In \cite{Akbari}, the commuting graph is defined so that its vertices are all non-central elements of the algebra $\mathcal{A}$, whereas in our case the vertices correspond to the classes of elements 
\(
    \mathbb{P}(\mathcal{A}/C_{\mathcal{A}}) = \{[a] = \F a + C_{\mathcal{A}} \mid a \in \mathcal{A} \setminus C_{\mathcal{A}} \}.
\)
Still, the above-mentioned result on the diameter of $\Gamma_C(M_n(\F))$ remains valid. Indeed,  
\begin{itemize}
    \item each path $[v_0] \leftrightarrow \dots \leftrightarrow [v_m]$ in our graph corresponds to the path $v_0 \leftrightarrow \dots \leftrightarrow v_m$ in the graph from~\cite{Akbari};
    \item any shortest path $v_0 \leftrightarrow \dots \leftrightarrow v_m$ in the graph from~\cite{Akbari} induces a path $[v_0] \leftrightarrow \dots \leftrightarrow [v_m]$ in our graph. Note that the loops are prohibited in our definition of the commuting graph. Thus, if $[v_i] = [v_{i+1}]$, then we ``glue together'' this pair of adjacent vertices, and the length of the path decreases by~$1$. However, if $m \geq 3$, then all the classes $[v_i]$ are pairwise distinct, as otherwise the original path could be shortened.
\end{itemize}
Thus, passing to elements of $\mathbb{P}(\A/C_{\A})$ as vertices of $\Gamma_C(\A)$ does not affect the diameter of the commuting graph if its value is at least~$3$.
\end{remark}

\begin{corollary} \label{corollary:commuting-graph-main}
Let~$\F$ be an algebraically closed field, $\operatorname{char}\F \neq 3$. Then the commuting graph $\Gamma_C(P_8(\F))$ is connected, and its diameter equals~$4$.
\end{corollary}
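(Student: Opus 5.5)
The plan is to reduce everything to the known matrix case. An algebraically closed field $\F$ with $\chrs \F \neq 3$ contains a primitive cube root of unity $\omega$, since $x^2+x+1$ is separable there and has a root. So the hypotheses of Proposition~\ref{proposition:commuting-graph-isomorphism} hold, and $\Gamma_C(P_8(\F)) \cong \Gamma_C(M_3(\F))$. Connectivity and diameter are preserved under graph isomorphism, so it suffices to show that $\Gamma_C(M_3(\F))$, in the projectivized sense of Definition~\ref{definition:graphs}, is connected with diameter $4$.

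For the matrix side I will invoke \cite[Corollary~7]{Akbari}: for algebraically closed $\F$ and $n = 3 \geq 3$, the commuting graph of $M_3(\F)$ is connected with diameter $4$. Here the graph is the one in the convention of \cite{Akbari}, whose vertices are the non-scalar matrices. I then transfer this to our graph via Remark~\ref{remark:connection-btw-difinitions-of-commutativity-graphs}.

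\textbf{Connectivity.} Take two classes $[a]$ and $[b]$ in our graph, with $a,b$ non-scalar. A path $a \leftrightarrow \dots \leftrightarrow b$ in the graph of \cite{Akbari} projects to a walk in our graph once consecutive equal classes are glued. This walk has length at most $4$, so every distance in our graph is at most $4$.

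\textbf{Lower bound.} Choose $a,b$ at distance exactly $4$ in the graph of \cite{Akbari}. Suppose $[a]$ and $[b]$ were joined by a path of length $m<4$ in our graph. Each edge $[v_i] \leftrightarrow [v_{i+1}]$ means $v_i v_{i+1} = v_{i+1} v_i$ for any representatives, because scalar shifts and scalings do not affect commutation. We can take the representatives at the endpoints to be $a$ and $b$ themselves, since any representative of $[a]$ has the form $\alpha a + c$ with $\alpha \neq 0$ and $c$ scalar, and commutes with exactly the same matrices as $a$. This lifts the short path to a path of length $m<4$ in the graph of \cite{Akbari}, a contradiction. Hence the diameter of our graph is exactly $4$.

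The only delicate step is this bookkeeping between the two conventions: the lifted vertices must be non-scalar, which holds because vertices of our graph are classes of non-central elements. I expect this lifting argument to be the main, though mild, obstacle; the rest is a direct citation.
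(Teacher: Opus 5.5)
Your proposal is correct and follows the paper's route: you use the isomorphism $\Gamma_C(P_8(\F)) \cong \Gamma_C(M_3(\F))$ from Proposition~\ref{proposition:commuting-graph-isomorphism}, cite \cite[Corollary~7]{Akbari}, and transfer the result between the two vertex conventions as in Remark~\ref{remark:connection-btw-difinitions-of-commutativity-graphs}. Your check that $\omega \in \F$ and your gluing/lifting argument for the upper and lower bounds make explicit what the paper's one-line proof leaves implicit.
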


\begin{proof}
Follows immediately from Proposition~\ref{proposition:commuting-graph-isomorphism} and~\cite[Corollary 7]{Akbari}.
\end{proof}

In the case when the field~$\F$ is not algebraically closed but contains a primitive cube root of unity $\omega$, the connectivity of $\Gamma_C(P_8(\F))$ may depend on~$\F$.

\begin{theorem}[{\cite[Theorem 19]{Akbari}}] \label{theorem:prime-diameter-four}
Let~$\F$ be an arbitrary field, and $p \ge 3$ be a prime number. If the graph $\Gamma_C(M_p(\F))$ is connected, then its diameter equals~$4$.
\end{theorem}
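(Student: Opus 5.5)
We intend to prove the statement by showing that every vertex of $\Gamma_C(M_p(\F))$ is adjacent to a ``special'' vertex, and that any two special vertices have a common rank-one neighbour. This gives $\D(A,B) \le 1+2+1 = 4$. The matching lower bound $\diam \ge 4$ is the general fact quoted above from \cite[Theorem~3]{Akbari}, which holds whenever the graph is connected and $n \ge 3$. By Remark~\ref{remark:connection-btw-difinitions-of-commutativity-graphs} it does not matter whether vertices are matrices or lines modulo scalars.

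\textbf{Step 1: the field.} By \cite[Theorem~6]{Akbari2}, connectivity of $\Gamma_C(M_p(\F))$ means that every extension of $\F$ of degree $p$ has a proper intermediate field. Since $p$ is prime, no extension of degree $p$ can have one. Hence $\F$ has no extension of degree $p$, so there is no irreducible polynomial of degree $p$ over $\F$.

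\textbf{Step 2: each non-scalar $A$ is adjacent to a special vertex.} Let $\chi$ be the characteristic polynomial of $A$. We split into two cases.
\begin{itemize}
\item If $\chi = gh$ with $g,h$ coprime and nonconstant, then $\F^p = \ker g(A) \oplus \ker h(A)$. The projection $P$ onto the first summand along the second is a polynomial in $A$, so it commutes with $A$. It is an idempotent with $P \ne 0, I$.
\item Otherwise $\chi = q^k$ with $q$ irreducible of degree $d$ and $dk = p$. By Step 1 we have $d \ne p$, so $d = 1$ because $p$ is prime. Thus $A$ has an eigenvalue $\lambda \in \F$. Take $x$ with $Ax = \lambda x$ and $y$ with $A^T y = \lambda y$. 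Then $A\,xy^T = \lambda\, xy^T = xy^T A$, so $A$ is adjacent to the rank-one matrix $xy^T$.
\end{itemize}
In both cases $A$ has a neighbour that is a nontrivial idempotent or a rank-one matrix; we call such vertices special.

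\textbf{Step 3: two special vertices have a common rank-one neighbour.} A rank-one matrix $zw^T$ commutes with $M$ as soon as $Mz = \alpha z$ and $M^T w = \alpha w$ for a common scalar $\alpha$. We attach to each special $M$ an admissible pair of subspaces $(U,W)$ such that every $z \in U$, $w \in W$ satisfies this condition.
\begin{itemize}
\item For a nontrivial idempotent $P$ of rank $r$, we may take $(\operatorname{Im}P, \operatorname{Im}P^T)$, both of dimension $r$, or $(\ker P, \ker P^T)$, both of dimension $p-r$ (with $\alpha = 1$ and $\alpha = 0$ respectively). Since $p$ is odd, one of these choices has dimension at least $(p+1)/2$.
\item For $xy^T$ we take $U = \ker y^T$ and $W = \ker x^T$, both of dimension $p-1 \ge (p+1)/2$ for $p \ge 3$. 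Here $xy^T z = 0$ and $(xy^T)^T w = y\, x^T w = 0$, so $\alpha = 0$.
\end{itemize}
Given two special vertices with chosen pairs $(U_1,W_1)$ and $(U_2,W_2)$, we have $\dim U_1 + \dim U_2 \ge p+1 > p$, so $U_1 \cap U_2 \ne 0$, and likewise $W_1 \cap W_2 \ne 0$. Choosing nonzero $z$ and $w$ in these intersections gives a non-scalar matrix $zw^T$ adjacent to both special vertices (or equal to one of them). Hence special vertices are at distance at most $2$.

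Combining Steps 2 and 3 yields $\D(A,B) \le 4$ for all vertices $A,B$, and therefore $\diam = 4$. The main point needing care is Step 2: in the primary case, primality of $p$ together with Step 1 forces a linear factor, and this is exactly where the hypothesis is used. Oddness of $p$ is what makes the dimension count in Step 3 work.
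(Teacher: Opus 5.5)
The paper gives no proof of this statement; it quotes it from \cite[Theorem~19]{Akbari}. So there is no internal argument to match, and your proposal stands as an independent route.

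\textbf{Verdict.} Your proof is correct. It proves the upper bound directly, relying on citations only for Step~1 and for the lower bound.

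\textbf{Structure.} Your argument uses the same ``hub'' skeleton the paper uses for $\widetilde{P}_8(\R)$ in Section~5. There, every vertex is adjacent to an idempotent (Lemma~\ref{lemma:path-to-idempotent}) and idempotents are at distance at most $2$ (Corollary~\ref{corollary:idempotents-distance-two}). In your version the hubs are nontrivial idempotents in $\F[A]$ and rank-one matrices. The individual steps check out:
\begin{itemize}
\item The B\'ezout projection $P$ is neither $0$ nor $I$, since $\dim\ker g(A)=\deg g>0$ and likewise for $h$. It is worth saying this explicitly.
\item Primality of $p$ forces $d=1$ in the primary case.
\item Oddness of $p$ gives $\max(r,p-r)\ge (p+1)/2$, so $U_1\cap U_2\neq 0$ and $W_1\cap W_2\neq 0$.
\end{itemize}

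\textbf{What your route buys.} Compared with a bare citation, it shows exactly where primality and oddness are used. It also gives a stronger by-product: for prime $p\ge 3$, the absence of irreducible polynomials of degree $p$ over $\F$ already implies that the graph is connected with diameter at most $4$.

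\textbf{Making it self-contained.} Both remaining external inputs can be replaced by short arguments.
\begin{itemize}
\item Step~1 needs only the easy direction of \cite[Theorem~6]{Akbari2}. Suppose $q$ is irreducible of degree $p$ with companion matrix $C$. Then $K=\F[C]$ is a field with $[K:\F]=p$. Every $X\in K\setminus \F I$ generates $K$, because $p$ is prime. Hence $X$ is nonderogatory and its centralizer is exactly $K$. So $K\setminus\F I$ is closed under adjacency and forms a union of components missing, for example, $E_{11}$.
\item For the lower bound, take the nilpotent Jordan block $J$ and its transpose $J^T$. Their centralizers are $\F[J]$ and $\F[J^T]$, which intersect only in the scalars.
\item Suppose $N=J^k u(J)$ and $N'=(J^T)^l v(J^T)$ commute, with $u(0),v(0)\neq 0$ and $1\le k,l\le p-1$. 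Then $N'$ preserves both $\ker N$ and $\operatorname{Im} N$.
\item Preserving $\ker N=\spn\{e_1,\dots,e_k\}$ forces $k+l\ge p+1$. Preserving $\operatorname{Im} N=\spn\{e_1,\dots,e_{p-k}\}$ forces $k+l\le p-1$. This is a contradiction, so there is no path of length $3$ and $\D(J,J^T)\ge 4$ over any field.
\end{itemize}
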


\begin{corollary}
Let $\operatorname{char}\F \neq 3$ and $\omega \in \F$. If the graph $\Gamma_C(P_8(\F))$ is connected, then its diameter equals~$4$.
\end{corollary}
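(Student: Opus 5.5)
The plan is to transfer the problem to matrices and then apply Theorem~\ref{theorem:prime-diameter-four} with $p = 3$. By Proposition~\ref{proposition:commuting-graph-isomorphism}, under the hypotheses $\chrs \F \neq 3$ and $\omega \in \F$ the graph $\Gamma_C(P_8(\F))$ is isomorphic to the graph $\Gamma_C(M_3(\F))$ in the paper's sense. In that sense the vertices are the classes $[a] = \F a + C_{M_3(\F)}$ and loops are excluded. Graph isomorphisms preserve connectivity and distances. So if $\Gamma_C(P_8(\F))$ is connected, then $\Gamma_C(M_3(\F))$ in the paper's sense is connected, and the two graphs have the same diameter.

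The one real subtlety is that Theorem~\ref{theorem:prime-diameter-four} is proved in \cite{Akbari} for the commuting graph whose vertices are all non-central matrices. So I need to pass between the two definitions, in both directions, as in Remark~\ref{remark:connection-btw-difinitions-of-commutativity-graphs}.

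First I check that connectivity transfers to the graph of \cite{Akbari}. Take two non-central matrices $a, b$. Suppose $[a] = [b]$, so that $b = \lambda a + c$ with $\lambda \in \F$ and $c$ central. Then $a$ and $b$ commute, so they are adjacent in the graph of \cite{Akbari}. Otherwise, a path $[a] = [v_0] \leftrightarrow \dots \leftrightarrow [v_m] = [b]$ in our graph lifts to the path $a \leftrightarrow v_1 \leftrightarrow \dots \leftrightarrow v_{m-1} \leftrightarrow b$. This works because commutation of representatives does not depend on the choice of representative: replacing $v$ by $\lambda v + c$ preserves commuting with everything $v$ commutes with, for $\lambda \neq 0$. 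Hence the graph of \cite{Akbari} is connected, and Theorem~\ref{theorem:prime-diameter-four} gives that its diameter equals $4$.

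It remains to compare the diameters. By Remark~\ref{remark:connection-btw-difinitions-of-commutativity-graphs}, every path in our graph lifts to a path of the same length, and shortest paths of length at least $3$ in the graph of \cite{Akbari} project to paths of the same length. The lifting argument gives that our diameter is at least the diameter of the graph of \cite{Akbari}, namely $4$, once the latter has been shown to be $4$. Conversely, a shortest path between representatives has length at most $4$ and projects to a path of length at most $4$ in our graph. Therefore $\diam \Gamma_C(M_3(\F)) = 4$ in our sense.

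Transferring back through the isomorphism of Proposition~\ref{proposition:commuting-graph-isomorphism} gives $\diam \Gamma_C(P_8(\F)) = 4$. I expect no serious obstacle beyond this bookkeeping between the two vertex conventions.
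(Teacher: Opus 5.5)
Your proposal is correct and follows the paper's proof: you pass to $\Gamma_C(M_3(\F))$ via Proposition~\ref{proposition:commuting-graph-isomorphism} and apply Theorem~\ref{theorem:prime-diameter-four} with $p=3$, handling the two vertex conventions as in Remark~\ref{remark:connection-btw-difinitions-of-commutativity-graphs}. Your explicit check that connectivity of the projective graph implies connectivity of the graph in \cite{Akbari} is a step the paper leaves implicit, and it is handled correctly.
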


\begin{proof}
Follows immediately from Proposition~\ref{proposition:commuting-graph-isomorphism} and Theorem~\ref{theorem:prime-diameter-four}.
\end{proof}

We now show that $\Gamma_C(P_8(\F))$ can be disconnected.

\begin{theorem}[{\cite[Theorem 6]{Akbari2}}] \label{theorem:connectivity-criterion-of-graph-Mn}
Let~$\F$ be a field, and $n \geq 3$. Then the graph $\Gamma_C(M_n(\F))$ is connected if and only if every field extension of~$\F$ of degree $n$ contains a proper intermediate field.
\end{theorem}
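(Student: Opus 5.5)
We prove the two implications separately. Throughout we work with matrices; scalar matrices are central and play no role. Since $n\ge 2$, every $F[a]$ below is a proper subset of $M_n(\F)$.

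\textbf{Necessity.} Suppose some extension $K\supseteq\F$ of degree $n$ has no proper intermediate field. Choose a primitive element of $K$ and let $a\in M_n(\F)$ be the companion matrix of its minimal polynomial. This polynomial is irreducible of degree $n$, so $a$ is cyclic and its centralizer is $\F[a]\cong K$. Take any non-scalar $b$ with $b\leftrightarrow a$. Then $b\in\F[a]$, so $\F[b]$ is a subfield of $K$ properly containing $\F$. By hypothesis $\F[b]=K=\F[a]$. Hence $b$ is also cyclic with centralizer $\F[a]$. Consequently the connected component of $a$ consists of classes of elements of $\F[a]\setminus\F I$. This is not all of $\Gamma_C(M_n(\F))$: for instance, a matrix unit not lying in $\F[a]$ is a vertex outside this component. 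So the graph is disconnected.

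\textbf{Sufficiency.} Assume every degree-$n$ extension of $\F$ has a proper intermediate field, and $n\ge3$. The plan is to show that every non-scalar matrix is joined by a path to a rank-one matrix, and that rank-one matrices are pairwise close.

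First, rank-one matrices are at distance at most $2$. Take $uv^T$ and $xy^T$. Since $n\ge3$, there exist nonzero vectors $p,q$ with $v^Tp=y^Tp=0$ and $q^Tu=q^Tx=0$. Then $c=pq^T$ is non-scalar and satisfies
\[
c\,uv^T=uv^T c=c\,xy^T=xy^T c=0 .
\]
So $c$ commutes with both.

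Next, fix a non-scalar $a$ and let $\chi$ be its characteristic polynomial. We split into cases.

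\emph{Case (i): $\chi$ has two coprime nonconstant factors.} Then $\F[a]$ contains a nontrivial idempotent $e$, and $e\leftrightarrow a$. Choose nonzero $u\in\operatorname{im}e$ and $v$ with $v^Te=v^T$. Then $e\,uv^T=uv^T=uv^Te$, so $e$ commutes with the rank-one matrix $uv^T$.

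\emph{Case (ii): $\chi=g^k$ with $g$ irreducible and $g(a)\neq0$.} Then $N=g(a)$ is a nonzero nilpotent matrix commuting with $a$. Choose nonzero $u\in\ker N$ and $v\in\ker N^T$. Then $Nuv^T=0=uv^TN$, so $N\leftrightarrow uv^T$.

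\emph{Case (iii): $\chi=g^k$, $g(a)=0$, $k\ge2$.} Then $\F^n$ is a $k$-dimensional vector space over $L=\F[x]/(g)$, and $a$ acts by a scalar of $L$. The centralizer of $a$ contains $M_k(L)$, hence a nontrivial idempotent. We then finish as in case (i).

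\emph{Case (iv): $\chi$ irreducible of degree $n$.} Then $\F[a]$ is a degree-$n$ extension, so by hypothesis it has a proper intermediate field $L$. Pick $b\in L\setminus\F$; then $b\leftrightarrow a$. The minimal polynomial of $b$ has degree $d=[\F(b):\F]$ with $1<d<n$. Hence the characteristic polynomial of $b$ is $g^{n/d}$ with $n/d\ge2$, and $b$ falls under case (ii) or case (iii).

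In every case $a$ is within a bounded distance of some rank-one matrix. Combined with the first step, the graph is connected.

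The main obstacle is this reduction in the sufficiency direction: one must make sure that every non-scalar matrix has a neighbor admitting a rank-one neighbor. The delicate point is the irreducible case (iv), where the field-theoretic hypothesis enters exactly once, by producing the element $b$ with a repeated irreducible factor in its characteristic polynomial.
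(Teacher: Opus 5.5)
The paper gives no proof of this statement. It imports it from \cite[Theorem~6]{Akbari2}, uses it as a black box, and needs only its ``only if'' direction, in Corollary~\ref{corollary:disconnected}. Your argument is a correct, self-contained proof of both directions, so there is no in-paper route to compare against. What your version adds is transparency about where each hypothesis enters.

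Your necessity half alone already gives Corollary~\ref{corollary:disconnected} without the citation. For $\F=\mathbb{Q}[\omega]$, let $a$ be the companion matrix of $x^3-\omega$, the minimal polynomial of $\zeta$ over $\F$. Your argument then shows that the component of $[a]$ consists only of classes of elements of $\F[a]\setminus\F I$.

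In your sufficiency half, $n\ge 3$ is used exactly once, for the common rank-one neighbour $pq^T$. The field-theoretic hypothesis is also used exactly once, in case (iv). The price is a weak quantitative bound, $\diam\Gamma_C(M_n(\F))\le 3+2+3=8$. The paper quotes sharper facts from \cite{Akbari}: $4\le\diam\le 6$ in general, and $\diam=4$ for prime $n$. This does not matter for connectivity.

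For $n=3$, or any prime $n$, the intermediate-field condition can hold only vacuously. So the criterion reads: $\Gamma_C(M_3(\F))$ is connected iff $\F$ has no cubic extension.

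A few details should be made explicit.
\begin{itemize}
\item Since $K/\F$ may be inseparable, a primitive element is not automatic. It exists here because, for any $\alpha\in K\setminus\F$, the field $\F(\alpha)$ is intermediate and differs from $\F$, hence equals $K$.
\item In case (i), $e\neq 0,I$ because both primary components $\ker\chi_1(a)$ and $\ker\chi_2(a)$ are nonzero. This holds since every irreducible factor of $\chi$ divides the minimal polynomial of $a$.
\item In case (iv), let $h$ be the minimal polynomial of $b$, so $h(b)=0$. Its characteristic polynomial is $h^{n/d}$ with $n/d\ge 2$, so $b$ falls specifically under case (iii), not case (ii).
\end{itemize}
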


\begin{corollary} \label{corollary:disconnected}
Let $\F = \mathbb{Q}[\omega]$ be the field obtained by adjoining an element $\omega$ to $\mathbb{Q}$. Then the graph $\Gamma_C(M_3(\F))$ is disconnected. Consequently, $\Gamma_C(P_8(\F))$ is also disconnected.
\end{corollary}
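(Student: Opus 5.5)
By Theorem~\ref{theorem:connectivity-criterion-of-graph-Mn} with $n=3$, the plan is to exhibit a field extension of $\F=\mathbb{Q}[\omega]$ of degree $3$ that has no proper intermediate field. Since $3$ is prime, any degree-$3$ extension $K/\F$ works automatically. By the tower law, an intermediate field $\F\subseteq L\subseteq K$ satisfies $[L:\F]\in\{1,3\}$, so $L=\F$ or $L=K$, and no proper intermediate field exists. The whole task therefore reduces to showing that $\F$ has at least one extension of degree~$3$.

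For this, take a cubic polynomial over $\F$ that is irreducible. A natural choice is $t^3-2$, whose splitting field $\mathbb{Q}(\sqrt[3]{2},\omega)$ has degree $6$ over $\mathbb{Q}$. We have $[\F:\mathbb{Q}]=2$, since $\omega$ is a root of $t^2+t+1$ and is not rational. Put $K=\F(\sqrt[3]{2})$. Then $K$ contains $\mathbb{Q}(\sqrt[3]{2})$, which has degree $3$ over $\mathbb{Q}$, so $[K:\mathbb{Q}]$ is divisible by both $2$ and $3$. Also $[K:\F]\le 3$, so $[K:\mathbb{Q}]\le 6$. Hence $[K:\mathbb{Q}]=6$ and $[K:\F]=3$. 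Equivalently, $t^3-2$ has no root in the quadratic field $\F$, because a root would generate a subfield of degree $3$ over $\mathbb{Q}$ inside a field of degree $2$. A cubic with no root is irreducible, which gives the same conclusion.

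With this $K$, Theorem~\ref{theorem:connectivity-criterion-of-graph-Mn} shows that $\Gamma_C(M_3(\F))$ is disconnected. The field $\F=\mathbb{Q}[\omega]$ has characteristic $0\neq 3$ and contains $\omega$, so Proposition~\ref{proposition:commuting-graph-isomorphism} applies and gives $\Gamma_C(P_8(\F))\cong\Gamma_C(M_3(\F))$. Therefore $\Gamma_C(P_8(\F))$ is disconnected as well.

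There is no real obstacle. The only points needing care are the degree count $[\F(\sqrt[3]{2}):\F]=3$ and the check that the hypotheses of the criterion and of the isomorphism are met.
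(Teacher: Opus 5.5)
Your proof is correct and follows the same route as the paper. You exhibit a degree-$3$ extension of $\F=\mathbb{Q}[\omega]$, use primality of $3$ and the tower law to rule out proper intermediate fields, and then apply Theorem~\ref{theorem:connectivity-criterion-of-graph-Mn} and Proposition~\ref{proposition:commuting-graph-isomorphism}. The only difference is the witness: you take $\F(\sqrt[3]{2})$ and get its degree by a divisibility argument, whereas the paper takes $\mathbb{Q}[\zeta]$ with $\zeta$ a primitive ninth root of unity and uses $\varphi(9)=6$; both work equally well.
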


\begin{proof}
Let $\zeta \in \mathbb{C}$ be a primitive ninth root of unity. Then
\[
    [ \mathbb{Q}[\zeta] : \mathbb{Q} ] = \varphi(9) = 6,
\]
where $\varphi$ is Euler's totient function. The minimal polynomial of $\zeta$ over $\mathbb{Q}$ is the ninth cyclotomic polynomial
\[
    \Phi_9(x) = x^6 + x^3 + 1.
\]
Note that $\zeta^3$ is a primitive cube root of unity.  Without loss of generality we may assume that $\omega = \zeta^3$.  The minimal polynomial of $\omega$ over $\mathbb{Q}$ is $x^2 + x + 1$, so
\[
    [ \mathbb{Q}[\omega] : \mathbb{Q} ] = 2.
\]
Since $\omega \in \mathbb{Q}[\zeta]$, the field $\mathbb{Q}[\omega]$ is a subfield of $\mathbb{Q}[\zeta]$.  Then, by the tower law,
\[
    [\mathbb{Q}[\zeta] : \mathbb{Q}[\omega]] = \frac{[\mathbb{Q}[\zeta] : \mathbb{Q}]}{[\mathbb{Q}[\omega] : \mathbb{Q}]} = \frac{6}{2} = 3.
\]
Thus $\mathbb{Q}[\zeta]$ is an extension of $\mathbb{Q}[\omega]$ of degree $3$.

Now assume that the extension $\mathbb{Q}[\zeta]/\mathbb{Q}[\omega]$ contains an intermediate field $E$.  Then, again by the tower law,
\[
    [\mathbb{Q}[\zeta] : E] \cdot [E : \mathbb{Q}[\omega]] = [\mathbb{Q}[\zeta] : \mathbb{Q}[\omega]] = 3.
\]
Since $3$ is a prime number, the field $E$ must coincide either with $\mathbb{Q}[\zeta]$ or with $\mathbb{Q}[\omega]$.  Therefore, there are no proper intermediate fields. Applying Theorem~\ref{theorem:connectivity-criterion-of-graph-Mn} and Proposition~\ref{proposition:commuting-graph-isomorphism}, we obtain the required result.
\end{proof}

\section{Path of length~$2$ between idempotents} \label{section:path-idempotents}

In this section we show that, in any Okubo algebra with idempotents over an arbitrary field~$\F$, the centralizers of any two idempotents always have a nontrivial intersection.  We will use this observation in the next section for studying the commuting graph of the real pseudo-octonions.

\begin{lemma} \label{lemma:length-two-path-idempotents}
Let $e,f \in \Okubo$ be nonzero idempotents.  Consider the element 
\[
    x = (e+f)*(e+f) - 2(e+f) = e*f + f*e - e - f.
\]
If $x \neq 0$, then $[e] \leftrightarrow [x] \leftrightarrow [f]$ in $\Gamma_C(\Okubo)$.
\end{lemma}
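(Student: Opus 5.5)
The plan is a direct computation: I will show that $e*x = x*e$ and then invoke the symmetry of the situation in $e$ and $f$. First, the two expressions for $x$ agree because $(e+f)*(e+f) = e*e + e*f + f*e + f*f = e + f + e*f + f*e$, using $e*e=e$ and $f*f=f$. Since $\leftrightarrow$ only requires commuting (coincidence of vertices is allowed), and $x \neq 0$ by assumption, it suffices to verify $[e,x]_* = e*x - x*e = 0$. The expression for $x$ is symmetric in $e$ and $f$, so the same computation with $e$ and $f$ interchanged gives $[f,x]_*=0$.

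Expanding, the terms $e*e$ and $e*e$ from $e*(-e)$ and $(-e)*e$ cancel. By Theorem~\ref{theorem:symmetric-flexible}(2) and Proposition~\ref{proposition:norm-of-idempotent}, $e*(f*e) = (e*f)*e = \n(e)f = f$, so these two terms cancel as well. What remains is
\[
e*x - x*e = e*(e*f) - (f*e)*e - e*f + f*e .
\]

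To handle the two remaining triple products, I will use the linearized identity \eqref{equation:lin-of-symm}. Taking $x=e$, $y=e$, $z=f$ in $x*(y*z)+z*(y*x)=\n(x,z)y$ gives $e*(e*f) + f*e = \n(e,f)\,e$. Taking $x=f$, $y=e$, $z=e$ in $(x*y)*z+(z*y)*x = \n(x,z)y$ gives $(f*e)*e + e*f = \n(e,f)\,e$. Substituting these into the displayed expression yields
\[
\bigl(\n(e,f)e - f*e\bigr) - \bigl(\n(e,f)e - e*f\bigr) - e*f + f*e = 0 .
\]

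The only step requiring care is choosing the right specializations of \eqref{equation:lin-of-symm}, so that the products $e*(e*f)$ and $(f*e)*e$ are expressed through $e*f$, $f*e$ and $e$. The rest is bookkeeping. The argument uses no assumption on the characteristic, so it holds in any Okubo algebra.
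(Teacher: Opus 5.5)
Your proof is correct and follows essentially the same route as the paper. The paper likewise expands $e*x$ and $x*e$ using Theorem~\ref{theorem:symmetric-flexible}(2) and the same two specializations of Eq.~\eqref{equation:lin-of-symm}, obtains $(\n(e,f)-1)e + f - e*f - f*e$ for both, and then invokes the symmetry in $e$ and $f$; the only difference is that you compute $e*x - x*e$ directly rather than each product separately.
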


\begin{proof}
By Proposition~\ref{proposition:norm-of-idempotent}, $\n(e) = \n(f) = 1$. Using Theorem~\ref{theorem:symmetric-flexible}(2) and Eq.~\eqref{equation:lin-of-symm}, one can verify that
\begin{align*}
    e*x &= e*(e*f) + e*(f*e) - e*e - e*f \\
    &= \n(e,f) e - f*(e*e) + \n(e) f - e - e*f \\
    &= (\n(e,f) - 1)e + f - e*f - f*e,\\
    x*e &= (e*f)*e + (f*e)*e - e*e - f*e \\
    &= \n(e) f + \n(e,f) e - (e*e)*f - e - f*e \\
    &= (\n(e,f) - 1)e + f - e*f - f*e,
\end{align*}
so $e*x = x*e$.  Similarly, $f*x = x*f$.
\end{proof}

\begin{lemma} \label{lemma:common-idempotents}
If $x = 0$ in Lemma~\ref{lemma:length-two-path-idempotents}, then $f = e + z$, where
\begin{enumerate}[{\rm (1)}]
    \item $\n(z) = \n(e,z) = 0$,
    \item $z*z = 0$,
    \item $z = e*z + z*e$.
\end{enumerate}
\end{lemma}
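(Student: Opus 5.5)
Define $z = f - e$, so that $f = e+z$, and rewrite the hypothesis $x=0$ and the idempotency of $f$ in terms of $e$ and $z$. The hypothesis $x = 0$ reads $e*f + f*e = e + f$. Substituting $f = e+z$ and using $e*e = e$, we get
\[
2e + e*z + z*e = 2e + z,
\]
that is, $e*z + z*e = z$. This is item (3).

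Next we expand the idempotency $f*f = f$:
\[
e + e*z + z*e + z*z = e + z .
\]
Combined with (3), this gives $z*z = 0$, which is item (2).

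For item (1), the norm conditions follow from Proposition~\ref{proposition:norm-of-idempotent}. Since $e$ and $f$ are nonzero idempotents, $\n(e) = \n(f) = 1$. Expanding $\n(f) = \n(e) + \n(e,z) + \n(z)$ gives the single relation
\[
\n(e,z) + \n(z) = 0 .
\]
To separate the two terms we use $z*z = 0$ together with Theorem~\ref{theorem:symmetric-flexible}(2):
\[
\n(z)\,z = (z*z)*z = 0 .
\]
If $z \neq 0$, this forces $\n(z) = 0$, and then $\n(e,z) = 0$ follows. If $z = 0$, all three claims hold trivially.

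A second route to $\n(e,z) = 0$, which does not depend on the case split, is to apply Eq.~\eqref{equation:lin-of-symm} with $x = z$, $y = e$ (the element $e$ in the middle slot), giving
\[
z*(e*e) + e*(e*z) = \n(z,e)\,e .
\]
One can then combine this with (3) and compare coefficients. The case argument above is simpler, though, so I would use it.

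The only potential subtlety is characteristic $2$. There the expansion $\n(e+z) = \n(e) + \n(z) + \n(e,z)$ still holds by the definition of the polar form, so nothing breaks, and I expect no real obstacle.
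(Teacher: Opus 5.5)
Your proof is correct. Items (2) and (3) are obtained exactly as in the paper, by substituting $f = e+z$ into $x=0$ and into $f*f=f$.

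The difference is in item (1). The paper reuses the identity $e*x = (\n(e,f)-2)e - x$ from the proof of Lemma~\ref{lemma:length-two-path-idempotents}, so $x=0$ immediately gives $\n(e,f)=2$. Hence $\n(z) = \n(f)+\n(e)-\n(e,f) = 0$, and then $\n(e,z)=0$, with no case distinction.

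You instead derive (2) first and read off $\n(z)=0$ from $\n(z)\,z = (z*z)*z = 0$, the same mechanism as in Proposition~\ref{proposition:quadratic-subalgebra}. This costs a harmless split on $z=0$. In exchange, the lemma no longer depends on the computation in the previous lemma, and it shows that isotropy of a nonzero $z$ follows from $z*z=0$ alone. The paper's route, for its part, yields the extra fact $\n(e,f)=2$.

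Your alternative via Eq.~\eqref{equation:lin-of-symm} also works and avoids the case split. Rewrite $e*(e*z) = e*z - e*(z*e) = e*z - z$ using (3) and $e*(z*e)=\n(e)z$. Then the identity becomes $z*e + e*z - z = \n(z,e)\,e$, and the left side vanishes by (3), so $\n(z,e)=0$.
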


\begin{proof}
We have $0 = e*x = (\n(e,f) - 2)e - x = (\n(e,f) - 2)e$, so $\n(e,f) = 2$.  Let $z = f - e$.  Then 
\begin{align*}
    \n(z) &= \n(f - e) = \n(f) + \n(e) - \n(e,f) = 1 + 1 - 2 = 0,\\
    \n(z,e) &= \n(e+z) - \n(e) - \n(z) = 1 - 1 - 0 = 0.
\end{align*}
Substituting $f = e + z$ into the equalities $x = 0$ and $f*f = f$, we obtain
\begin{align*}
    0 &= x = e*(e+z) + (e+z)*e - e - (e+z) = e*z + z*e - z,\\
    0 &= f*f - f = (e+z)*(e+z) - (e+z) = (e*z + z*e - z) + z*z = z*z. \qedhere
\end{align*}
\end{proof}

\begin{remark} \label{remark:anisotropic}
In particular, if the Okubo algebra $\Okubo$ has no zero divisors, that is, the norm on $\Okubo$ is anisotropic (see \cite[Lemma~2.1]{EM91}), then the condition $x = 0$ in Lemma~\ref{lemma:length-two-path-idempotents} is possible only if $e = f$.
\end{remark}
	
\begin{definition}
Let $\A$ be an arbitrary algebra. The {\em centralizer} of an element~$a \in \A$ is the set of elements in $\A$ that commute with it:
\[
    C_\A(a) = \big\{ b \in \A \: | \: ab=ba \big\}.
\]
\end{definition}

\begin{lemma} \label{lemma:idempotents-algebraically-closed}
Let the field~$\F$ be algebraically closed, $\operatorname{char}\F \neq 3$, and let $\Okubo = P_8(\F)$ be the pseudo-octonion algebra over~$\F$. Then the elements $e, z \in \Okubo$ which satisfy the conditions of Lemma~\ref{lemma:common-idempotents}, with $z$ being nonzero, have the form
\[
    z = \begin{pmatrix}
        0 &  0 & 1 \\
        0 & 0 & 0 \\
        0 & 0 & 0
    \end{pmatrix}, \quad
    e \in \left\{ \begin{pmatrix}
        -1 &  0 & 0 \\
        0 & -1 & 0 \\
        0 & 0 & 2
    \end{pmatrix}, \; \begin{pmatrix}
        2 &  0 & 0 \\
        0 & -1 & 0 \\
        0 & 0 & -1
    \end{pmatrix} \right\}
\]
up to conjugation by matrices from $GL_3(\F)$. Moreover, for
\[
    g = \begin{pmatrix}
        -1 &  0 & 0 \\
        0 & 2 & 0 \\
        0 & 0 & -1
    \end{pmatrix}
\]
we have $g \in C_{\Okubo}(e) \cap C_{\Okubo}(z)$, i.e.\ $[e] \leftrightarrow [g] \leftrightarrow [f]$.
\end{lemma}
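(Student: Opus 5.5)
The plan is to turn the three conditions of Lemma~\ref{lemma:common-idempotents}, together with $e*e=e$, into statements about $3\times 3$ matrices via \eqref{equation:product}. Then I classify the pairs $(e,z)$ by simultaneous conjugation. Conjugation by $GL_3(\F)$ preserves $\frsl_3(\F)$, the trace form and the product $*$, so it is an automorphism of $P_8(\F)$ and normalizing is legitimate.

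First I translate the conditions. By \eqref{equation:norm-division-by-3} and the identity $\tr(x^2)=(\tr x)^2-2\s(x)$, for traceless $x$ we get $\tr(x^2)=-2\s(x)=6\n(x)$ in every characteristic $\neq 3$. Likewise the bilinear form is $\n(x,y)=\tr(xy)/3$. For any $x$ the product \eqref{equation:product} gives $x*x=x^2-\tfrac{\tr(x^2)}{3}I$.
\begin{itemize}
\item For $e$: Proposition~\ref{proposition:norm-of-idempotent} gives $\tr(e^2)=6$, so $e*e=e$ becomes $e^2-e-2I=0$, i.e.\ $(e-2I)(e+I)=0$. Hence $e$ is diagonalizable with eigenvalues in $\{2,-1\}$; these are distinct since $\chrs\F\neq 3$. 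Being traceless, $e$ is conjugate to $\mathrm{diag}(2,-1,-1)$.
\item For $z$: $\n(z)=0$ gives $\tr(z^2)=0$, and then $z*z=0$ reads $z^2=0$.
\item Condition (3): it reads $z=ez+ze-\tfrac{2}{3}\tr(ez)I$. Since $\n(e,z)=0$ means $\tr(ez)=0$, this is simply $ez+ze=z$.
\end{itemize}

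Next I fix $e=\mathrm{diag}(2,-1,-1)$ and analyse $z$ entrywise. The equation $ez+ze=z$ says $(\lambda_i+\lambda_j-1)z_{ij}=0$. So $z_{ij}$ can be nonzero only when $\{\lambda_i,\lambda_j\}=\{2,-1\}$, i.e.\ only in positions $(1,2),(1,3),(2,1),(3,1)$. Write $z=e_1w^T+ue_1^T$ with $u,w\in\spn\{e_2,e_3\}$.

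Now I use $z^2=0$. A direct computation gives $z^2=(w^Tu)\,e_1e_1^T+uw^T$. This forces $uw^T=0$, so $u=0$ or $w=0$; the remaining term $w^Tu$ then vanishes automatically. This is the step that needs a little care, since it is where the possibility of $z$ having both a row part and a column part must be excluded.

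So either $z=e_1w^T$ or $z=ue_1^T$ with a nonzero vector in $\spn\{e_2,e_3\}$. Conjugating by a block matrix $1\oplus h$ with $h\in GL_2(\F)$, which commutes with $e$, I move that vector to $e_3$.
\begin{itemize}
\item In the first case this gives $z=E_{13}$ with $e=\mathrm{diag}(2,-1,-1)$.
\item In the second case $z=E_{31}$, and conjugating by the permutation matrix swapping $e_1$ and $e_3$ gives $z=E_{13}$ with $e=\mathrm{diag}(-1,-1,2)$.
\end{itemize}
Conversely, both pairs satisfy conditions (1)--(3); for instance $\tr(ez)=0$ because $ez$ is strictly upper triangular.

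Finally I check $g$. It is diagonal, so it commutes with $e$ under matrix multiplication. Also $gz-zg=(g_{11}-g_{33})E_{13}=0$. Therefore $g$ commutes with $f=e+z$ as well. By Remark~\ref{remark:commutativity}, matrix commutation is equivalent to $*$-commutation, so $g\in C_{\Okubo}(e)\cap C_{\Okubo}(z)$.

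It remains to see that $[g]$ is a vertex and the edges are genuine. Here $g\neq 0$, and $g\notin\F e\cup\F f$ because $g$ has the eigenvalue $2$ in the middle position. This gives the path $[e]\leftrightarrow[g]\leftrightarrow[f]$.
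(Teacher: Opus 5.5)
Your proof is correct, but it normalizes the pair in the opposite order from the paper.

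The paper first puts $z$ into Jordan form $z=E_{13}$, using $z^2=0$. It then solves $z=ez+ze-\tfrac23\tr(ez)I$, which makes $e$ upper triangular with diagonal $(\alpha,-1,1-\alpha)$. Next it cites Elduque's result that every nonzero idempotent is conjugate to $\mathrm{diag}(-1,-1,2)$, which gives $\alpha\in\{-1,2\}$. It fixes the remaining entries by computing $e*e=e$ explicitly, and finally diagonalizes $e$ by a unipotent $C$ that fixes $z$.

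You normalize $e$ first, and you get its conjugacy class directly from $e^2-e-2I=0$, so you need no citation. You then use the eigenspace decomposition of $e$. The relation $ez+ze=z$ confines $z$ to the two off-diagonal blocks linking the $2$- and $(-1)$-eigenspaces. The condition $z^2=0$ forces $z$ into exactly one of these blocks, and the centralizer $GL_1\times GL_2$ of $e$ absorbs the remaining freedom.

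What each route buys:
\begin{itemize}
\item Your route is self-contained and explains why there are exactly two normal forms. Your converse check also shows that both forms actually occur.
\item The paper's route starts from the more immediate normal form of $z$. The cost is an entrywise computation of $e*e=e$ and the ad hoc conjugation by $C$.
\end{itemize}

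Two small remarks:
\begin{itemize}
\item In characteristic $2$, the traceless solutions of $e^2-e-2I=0$ also include $2I=0$. This case is excluded only because $e\neq 0$, and you should say so explicitly.
\item Showing that $[g]$ is distinct from $[e]$ and $[f]$ is not needed, since $\leftrightarrow$ allows coincidence. For $f$, the clean reason is simply that $f$ is not diagonal.
\end{itemize}
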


\begin{proof}
Since $\n(z) = 0$, it can be easily seen that $0 = z*z = zz$, and then the desired form of the matrix~$z$ follows immediately from its Jordan normal form.  Next, Eq.~\eqref{equation:product} implies that $z = e*z + z*e = ez + ze - 2/3 \cdot \tr(ez) I$. A straightforward computation shows that
\[
    e = 
    \begin{pmatrix}
        \alpha &  \beta & \gamma \\
        0 & -1 & \delta \\
        0 & 0 & 1-\alpha
    \end{pmatrix}
\]
for some $\alpha,\beta,\gamma,\delta \in \F$.  Since $e$ is idempotent, by~\cite[p.~10]{Eld15}, it is conjugate to the matrix
\[
    \begin{pmatrix}
        -1 &  0 & 0 \\
        0 & -1 & 0 \\
        0 & 0 & 2
    \end{pmatrix}.
\]
Hence either $\alpha = -1$ or $\alpha = 2$.  Writing out explicitly $e = e*e = ee - \tr(ee)/3 \cdot I$, we obtain that
\[
    e = \begin{pmatrix}
        -1 &  0 & \gamma \\
        0 & -1 & \delta \\
        0 & 0 & 2
    \end{pmatrix}
    \quad \text{or} \quad
    e = \begin{pmatrix}
        2 &  \beta & \gamma \\
        0 & -1 & 0 \\
        0 & 0 & -1
    \end{pmatrix}.
\]
Consider
\[
    C = \begin{pmatrix}
        1 &  0 & -\gamma/3 \\
        0 & 1 & -\delta/3 \\
        0 & 0 & 1
    \end{pmatrix}
    \quad \text{or} \quad
    C = \begin{pmatrix}
        1 & -\beta/3 & -\gamma/3 \\
        0 & 1 & 0 \\
        0 & 0 & 1
    \end{pmatrix},
\]
respectively.  Then $CzC^{-1} = z$, and
\[
    CeC^{-1} = \begin{pmatrix}
        -1 &  0 & 0 \\
        0 & -1 & 0 \\
        0 & 0 & 2
    \end{pmatrix}
    \quad \text{or} \quad
    CeC^{-1} = \begin{pmatrix}
        2 & 0 & 0 \\
        0 & -1 & 0 \\
        0 & 0 & -1
    \end{pmatrix}.
\]
The condition $g \in C_{\Okubo}(e) \cap C_{\Okubo}(z)$ can be  verified directly by using Remark~\ref{remark:commutativity}.
\end{proof}

\begin{corollary} \label{corollary:idempotents-distance-two}
Let $\Okubo$ be an Okubo algebra over an arbitrary field~$\F$, and let $e,f \in \Okubo$ be nonzero idempotents. Then $\D([e],[f]) \le 2$ in $\Gamma_C(\Okubo)$.
\end{corollary}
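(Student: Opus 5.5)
The plan is a case analysis on the element $x = e*f + f*e - e - f$ from Lemma~\ref{lemma:length-two-path-idempotents}. If $[e]=[f]$ there is nothing to prove, so assume $e \neq f$. Since two distinct idempotents cannot be proportional (by Proposition~\ref{proposition:norm-of-idempotent}, $\lambda e$ idempotent forces $\lambda^2=1$, and $\lambda = -1$ gives $e*e = -e$, which is impossible), the classes $[e]$ and $[f]$ are then distinct vertices.

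\textbf{Case $x \neq 0$.} Lemma~\ref{lemma:length-two-path-idempotents} gives $[e] \leftrightarrow [x] \leftrightarrow [f]$. If $[x]$ coincides with $[e]$ or $[f]$, this only shortens the path. Either way $\D([e],[f]) \le 2$.

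\textbf{Case $x = 0$.} By Lemma~\ref{lemma:common-idempotents}, $f = e+z$ with $z \ne 0$, $\n(z)=\n(e,z)=0$, $z*z=0$ and $z = e*z+z*e$. Any nonzero $g$ commuting with both $e$ and $z$ also commutes with $f$, because commutation is linear. So it suffices to show that the subspace
\[
  W = C_\Okubo(e) \cap C_\Okubo(z)
\]
is nonzero; then either $g \in W\setminus\{0\}$ gives a path $[e]\leftrightarrow[g]\leftrightarrow[f]$, or $[g]$ equals one of the endpoints, which yields an even shorter path. The key observation is that $W$ is the kernel of the $\F$-linear map $y \mapsto (e*y - y*e,\; z*y - y*z)$, so its dimension does not change under extension of scalars to $\overline{\F}$. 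Hence it is enough to prove $W \otimes_\F \overline{\F} \ne 0$ inside $\Okubo_{\overline{\F}}$. The elements $e\otimes 1$ and $z\otimes 1$ still satisfy the hypotheses of Lemma~\ref{lemma:common-idempotents}, since these are polynomial identities.

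\textbf{Sub-case $\chrs \F \neq 3$.} Here $\Okubo_{\overline{\F}} \cong P_8(\overline{\F})$, and Lemma~\ref{lemma:idempotents-algebraically-closed} shows that, up to an automorphism induced by conjugation (which preserves commutation by Remark~\ref{remark:commutativity}), the pair $(e,z)$ is one of the two listed ones. The explicit element $g$ then lies in the extended centralizer intersection, so $W \ne 0$.

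\textbf{Sub-case $\chrs \F = 3$ (main obstacle).} This is the hard step, since the matrix model is unavailable. The first thing to try is a uniform argument avoiding models: look for an element of $W$ built from $e$ and $z$ alone, for instance among $e*z$, $z*e$ and $e*z - z*e$. One would test commutation with $e$ and $z$ using Theorem~\ref{theorem:symmetric-flexible}(2), Eq.~\eqref{equation:lin-of-symm}, $z*z=0$, $z=e*z+z*e$ and $\n(e,z)=0$, and check nonvanishing via the norm or by pairing with a suitable element. If no such candidate works (the model computation suggests $g$ may not lie in the algebra generated by $e,z$), the fallback is the same dimension argument as before. Over an algebraically closed field of characteristic $3$ the Okubo algebra is unique up to isomorphism, as can be deduced from the construction in~\cite{EP96}; one would use that explicit model, classify the pairs $(e,z)$ up to automorphism as in Lemma~\ref{lemma:idempotents-algebraically-closed}, and exhibit a common commuting element directly. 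Once $W \ne 0$ is established in all characteristics, the bound $\D([e],[f]) \le 2$ follows.
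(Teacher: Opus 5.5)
Your reduction coincides with the paper's proof: the case split on $x$, the use of Lemma~\ref{lemma:common-idempotents}, and the observation that $C_\Okubo(e)\cap C_\Okubo(z)$ is the solution space of a linear system, so its nonvanishing can be checked over $\overline{\F}$. Your treatment of $\chrs\F\neq 3$ via Lemma~\ref{lemma:idempotents-algebraically-closed} also matches. The genuine gap is the sub-case $\chrs\F = 3$, which you do not prove. You list candidates but verify none of them. The fallback rests on an unproved uniqueness claim for Okubo algebras over algebraically closed fields of characteristic $3$, and on a classification of pairs $(e,z)$ that you never carry out. As written, the corollary is established only for $\chrs\F\neq 3$.

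The missing step is short, and your own candidate $w = e*z - z*e$ works, with no extension of scalars needed. Use Theorem~\ref{theorem:symmetric-flexible}(2), Eq.~\eqref{equation:lin-of-symm}, $\n(e)=1$, $\n(z)=\n(e,z)=0$ and $z*z=0$. For commutation with $z$, one gets $z*(e*z)=(z*e)*z=0$, $z*(z*e)=-e*(z*z)=0$ and $(e*z)*z=-(z*z)*e=0$, hence $z*w=w*z=0$. For commutation with $e$, one gets $e*(z*e)=(e*z)*e=z$, $e*(e*z)=-z*e$ and $(z*e)*e=-e*z$. Using $z=e*z+z*e$, this gives
\[
e*w - w*e = (-z*e - z) - (z + e*z) = -2z - (e*z+z*e) = -3z .
\]
So $w$ commutes with both $e$ and $z$ precisely when $\chrs\F=3$, which also explains why a different argument is needed otherwise. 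If $w=0$, then $e*z=z*e$, and $z$ itself is a nonzero element of $C_\Okubo(e)\cap C_\Okubo(z)$. This is exactly the paper's argument: its element $y=z+e*z$ equals $2\,e*z+z*e=-w$ in characteristic $3$.
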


\begin{proof}
If $x \neq 0$ in Lemma~\ref{lemma:length-two-path-idempotents}, then $[e] \leftrightarrow [x] \leftrightarrow [f]$ in $\Gamma_C(\Okubo)$.  Otherwise, the conditions of Lemma~\ref{lemma:common-idempotents} hold.  If $z = 0$, then $e = f$, and $\D([e],[f]) = 0$. Thus we may assume that $z \neq 0$.

First, we consider the case when $\operatorname{char}\F \neq 3$.  Let $L_e$ and $R_e$ be the linear operators of left and right multiplication by $e$, respectively. Then $C_{\Okubo}(e) = \ker(L_e - R_e)$.  Similarly, $C_{\Okubo}(z) = \ker(L_z - R_z)$.  Passing to the algebraic closure $\overline{\F}$ of~$\F$, we obtain the Okubo algebra $\Okubo_{\overline{\F}} = \Okubo \otimes_{\F} \overline{\F}$ over $\overline{\F}$, where the elements $e$ and $z$ still satisfy the conditions of Lemma~\ref{lemma:common-idempotents}.  The matrices of the linear operators $L_e, R_e, L_z, R_z$ written in some basis of~$\Okubo$ are also the matrices of the corresponding operators on $\Okubo_{\overline{\F}}$, in the same basis.  By Lemma~\ref{lemma:idempotents-algebraically-closed}, there exists a nonzero element $g \in C_{\Okubo_{\overline{\F}}}(e) \cap C_{\Okubo_{\overline{\F}}}(z) = \ker_{\overline{\F}}(L_e - R_e) \cap \ker_{\overline{\F}}(L_z - R_z)$.  Since the existence of a nonzero solution for a system of linear equations does not depend on passing to the algebraic closure, there exists a nonzero element $g' \in \ker_{\F}(L_e - R_e) \cap \ker_{\F}(L_z - R_z) = C_{\Okubo}(e) \cap C_{\Okubo}(z)$, and then $[e] \leftrightarrow [g'] \leftrightarrow [f]$.

Consider now the case when $\operatorname{char}\F = 3$.  Let $y = z + e*z$.  If $y = 0$, then $z*e = z - e*z = 2z = -z = e*z$, hence $[e] \leftrightarrow [z] \leftrightarrow [f]$.

Now assume that $y \neq 0$.  Using Theorem~\ref{theorem:symmetric-flexible}(2) and Eq.~\eqref{equation:lin-of-symm}, we obtain
\begin{align*}
    z*y &= z*(z + e*z) = z*z + \n(z)e = 0,\\
    y*z &= (z + e*z)*z = z*z + \n(e,z)z - (z*z)*e = 0,
\end{align*}
so $y \in C_{\Okubo}(z)$.  We show that $y \in C_{\Okubo}(e)$. Indeed,
\begin{align*}
    e*y &= e*z + e*(e*z) = e*z + \n(e,z)e - z*(e*e) = e*z - z*e,\\
    y*e &= z*e + (e*z)*e = z*e + z = e*z + 2z*e.
\end{align*}
Since $\operatorname{char}\F = 3$, we have $e*y = y*e$, so $[e] \leftrightarrow [y] \leftrightarrow [f]$.
\end{proof}
	
\section{Commuting graph of the real pseudo-octonions}

Finally, we consider the case of the real Okubo algebra $\widetilde{P}_8(\R)$. In this section we show that the commuting graph $\Gamma_C(\widetilde{P}_8(\R))$ is connected, and its diameter equals four, as in the case of an Okubo algebra over an algebraically closed field.

Recall that the Gell--Mann matrices are a set of eight traceless Hermitian matrices which satisfy the condition $\tr(\lambda_j\lambda_k) = 2 \delta_{jk}$, see~\cite[p.~1074]{Gell-Mann}:
\begin{align*}
    \lambda_1 &=
    \begin{pmatrix}
        0 & 1 & 0 \\
        1 & 0 & 0 \\
        0 & 0 & 0 \\
    \end{pmatrix},
    &
    \lambda_2 &= 
    \begin{pmatrix}
        0 & -i & 0 \\
        i & 0 & 0 \\
        0 & 0 & 0 \\
    \end{pmatrix},\\
    \lambda_3 &= 
    \begin{pmatrix}
        1 & 0 & 0 \\
        0 & -1 & 0 \\
        0 & 0 & 0 \\
    \end{pmatrix},
    &
    \lambda_4 &=
    \begin{pmatrix}
        0 & 0 & 1 \\
        0 & 0 & 0 \\
        1 & 0 & 0 \\
    \end{pmatrix},\\
    \lambda_5 &= 
    \begin{pmatrix}
        0 & 0 & -i \\
        0 & 0 & 0 \\
        i & 0 & 0 \\
    \end{pmatrix},
    &
    \lambda_6 &= 
    \begin{pmatrix}
        0 & 0 & 0 \\
        0 & 0 & 1 \\
        0 & 1 & 0 \\
    \end{pmatrix},\\
    \lambda_7 &=
    \begin{pmatrix}
        0 & 0 & 0 \\
        0 & 0 & -i \\
        0 & i & 0 \\
    \end{pmatrix},
    &
    \lambda_8 &= 
    \begin{pmatrix}
        \frac{1}{\sqrt{3}} & 0 & 0 \\
        0 & \frac{1}{\sqrt{3}} & 0 \\
        0 & 0 & -\frac{2}{\sqrt{3}} \\
    \end{pmatrix}.
\end{align*}

As noted in~\cite[p.~45]{Oku95}, the elements $e_j = \sqrt{3} \lambda_j \: (j=1,\dots,8)$ form an orthonormal basis of the algebra~$\widetilde{P}_8(\R)$ with respect to the inner product $\n(\cdot,\cdot)/2$.

\begin{definition}[{\cite[p.~18]{Oku78}}]
The subalgebra $\widetilde{P}_4(\R) = \spn \{ e_1, e_2, e_3, e_8\}$ is called the {\em real pseudo-quaternion algebra}.
\end{definition}

Define an element $f \in \widetilde{P}_8(\R)$ by
\begin{equation} \label{equation:idempotent}
    f = \begin{pmatrix}
        -1 &  0 & 0 \\
        0  & -1 & 0 \\
        0  &  0 & 2
    \end{pmatrix}.    
\end{equation}
According to~\cite[p.~10]{Eld15}, the element $f$ is the unique nonzero idempotent in $P_8(\C)$ up to conjugation. Since $\widetilde{P}_8(\R)$ can be considered as a subset of $P_8(\C)$, every idempotent element of $\widetilde{P}_8(\R)$ is also an idempotent element in $P_8(\C)$. It is well known that any Hermitian matrix is unitarily similar to its Jordan normal form, and thus we immediately obtain the following proposition.
    
\begin{proposition} \label{proposition:uniqe-idempotent}
The element~\eqref{equation:idempotent} is the unique idempotent in $\widetilde{P}_8(\R)$ up to unitary conjugation.
\end{proposition}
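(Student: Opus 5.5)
We need to show two things: the matrix $f$ in \eqref{equation:idempotent} lies in $\widetilde{P}_8(\R)$ and is idempotent, and every nonzero idempotent $e \in \widetilde{P}_8(\R)$ equals $ufu^{*}$ for some unitary $u \in U(3)$. The key observation is that unitary conjugation $x \mapsto uxu^{*}$ is an automorphism of $\widetilde{P}_8(\R)$. It preserves Hermitian traceless matrices, and since \eqref{equation:product} involves only matrix products and the trace, it preserves ``$*$''. So the statement is meaningful, and idempotency is invariant under such conjugation.

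\emph{First step.} Check directly that $f$ is idempotent. Since $f$ is diagonal, $f*f = ff - \frac{\tr(ff)}{3}I$. Here $ff = \mathrm{diag}(1,1,4)$ and $\tr(ff) = 6$, so $f*f = \mathrm{diag}(-1,-1,2) = f$. Clearly $f$ is real diagonal, hence Hermitian, and it is traceless.

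\emph{Second step.} Let $e \neq 0$ be an idempotent of $\widetilde{P}_8(\R)$. Viewing $\widetilde{P}_8(\R) \subset P_8(\C)$ (with $\mu$ chosen so the products agree, as in the paper), $e$ is an idempotent of $P_8(\C)$. By \cite[p.~10]{Eld15} it is therefore $GL_3(\C)$-conjugate to $f$. In particular $e$ has eigenvalues $-1,-1,2$.

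This can also be seen without the citation. Since $e$ is diagonalizable, idempotency gives $e^2 - \frac{\tr(e^2)}{3}I = e$. By Proposition~\ref{proposition:norm-of-idempotent}, $\n(e) = 1$, so $\tr(e^2) = 6$ and $e^2 - e - 2I = 0$. Hence every eigenvalue lies in $\{2,-1\}$, and the trace condition forces the multiset of eigenvalues to be $\{-1,-1,2\}$.

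\emph{Third step.} Since $e$ is Hermitian, the spectral theorem gives a unitary $u$ with $u^{*}eu$ diagonal with diagonal entries $-1,-1,2$ in some order. Composing with a permutation matrix, which is unitary, we arrange the order to be $(-1,-1,2)$. Thus $e = ufu^{*}$.

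The only mildly delicate point is justifying that the idempotents of $\widetilde{P}_8(\R)$ are idempotents of $P_8(\C)$ under the same product. The eigenvalue computation in the second step sidesteps this issue entirely, since it uses only the defining product restricted to Hermitian matrices. For that reason I would present that self-contained argument as the main proof.
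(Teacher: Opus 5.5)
Your proof is correct, and it has the same skeleton as the paper's: determine the spectrum of a nonzero idempotent $e$, then use that a Hermitian matrix is unitarily diagonalizable. The paper obtains the spectrum by viewing $\widetilde{P}_8(\R)\subset P_8(\C)$ and citing \cite[p.~10]{Eld15}, which says every nonzero idempotent of $P_8(\C)$ is conjugate to $f$. That is exactly your first variant of the second step.

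Your preferred variant replaces the citation with a direct computation:
\begin{itemize}
\item $e*e=e^2-\frac{\tr(e^2)}{3}I$;
\item $\n(e)=1$ gives $\tr(e^2)=6$, hence $(e-2I)(e+I)=0$;
\item $\tr e=0$ then forces the spectrum $\{-1,-1,2\}$.
\end{itemize}
This makes the proposition self-contained. It uses only Proposition~\ref{proposition:norm-of-idempotent} and in effect reproves the part of Elduque's classification that is needed. The paper's version is shorter and matches its later reliance on \cite{Eld15}.

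You also explicitly check that $f$ is idempotent and that $x\mapsto uxu^*$ is an automorphism of $\widetilde{P}_8(\R)$, so the whole unitary orbit of $f$ consists of idempotents. The paper leaves this implicit, even though it uses it in Lemma~\ref{lemma:path-to-idempotent}.

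Two small remarks:
\begin{itemize}
\item The ``delicate point'' you raise is harmless. The formula $x*x=x^2-\frac{\tr(x^2)}{3}I$ does not involve $\mu$, so idempotency means the same thing in $\widetilde{P}_8(\R)$ and in $P_8(\C)$.
\item The phrase ``since $e$ is diagonalizable'' before the idempotency equation is a non sequitur; that equation holds for any $e$. Diagonalizability could instead replace the appeal to $\n(e)=1$. A nonzero traceless diagonalizable $e$ with $e^2-e-cI=0$ must have eigenvalues $r_1,r_1,r_2$ with $r_1\neq r_2$, $r_1+r_2=1$ and $2r_1+r_2=0$.
\end{itemize}
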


We determine the explicit form of the centralizer of the idempotent $f$.
    
\begin{proposition} \label{proposition:centre-of-p4}
    $C_{\widetilde{P}_8(\R)} (f) = \widetilde{P}_4 (\R)$.
\end{proposition}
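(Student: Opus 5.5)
Our plan is to reduce the computation to ordinary matrix commutation via Remark~\ref{remark:commutativity}. Note that $\widetilde{P}_8(\R)\subset P_8(\C)$, and $\omega\in\C$ with $2\mu-1\neq 0$. Indeed, $2\mu - 1 = (2-2\omega^2-3)/3 = -(1+2\omega^2)/3$, and $1+2\omega^2 = -\sqrt{-3}$ or $\sqrt{-3}$, which is nonzero. So for $a\in\widetilde{P}_8(\R)$ we have $a*f=f*a$ if and only if $af=fa$ as complex $3\times 3$ matrices. The same holds for Okubo's real product, which is given by the same formula with complex coefficients, restricted to Hermitian matrices. Hence
\[
C_{\widetilde{P}_8(\R)}(f)=\{a\in \widetilde{P}_8(\R)\;|\; af=fa\}.
\]

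Next we compute the matrix centralizer. Since $f=\operatorname{diag}(-1,-1,2)$ has eigenvalue $-1$ on $\spn\{E_1,E_2\}$ and eigenvalue $2$ on $\spn\{E_3\}$, a matrix commutes with $f$ exactly when it preserves these eigenspaces. These are the block-diagonal matrices
\[
a=\begin{pmatrix} A & 0\\ 0 & c\end{pmatrix},\qquad A\in M_2(\C),\ c\in\C.
\]
Concretely, $(af-fa)_{jk}=(f_{kk}-f_{jj})a_{jk}$, and this forces $a_{13}=a_{23}=a_{31}=a_{32}=0$. Imposing that $a$ be Hermitian and traceless, $A$ is a $2\times2$ Hermitian matrix and $c=-\tr A\in\R$. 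This gives a real space of dimension $4+0=4$, since $c$ is determined by $A$.

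Finally we match this with $\widetilde{P}_4(\R)=\spn\{e_1,e_2,e_3,e_8\}$. Inspecting the Gell-Mann matrices, $\lambda_1,\lambda_2,\lambda_3$ are supported in the upper-left $2\times2$ block and $\lambda_8$ is diagonal, so all four lie in the centralizer. They are linearly independent over $\R$, since they are orthogonal with respect to $\tr(\lambda_j\lambda_k)=2\delta_{jk}$. Comparing dimensions yields equality.

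The only delicate step is the first reduction: we must make sure Remark~\ref{remark:commutativity} applies to the real form with its product inherited from $P_8(\C)$, and that the factor $2\mu-1$ is nonzero. Both checks are elementary.
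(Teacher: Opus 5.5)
Your proof is correct, and it differs from the paper's at one step. The paper proves $\widetilde{P}_4(\R) \subseteq C_{\widetilde{P}_8(\R)}(f)$ essentially as you do: block-diagonal matrices commute with $f$ because its blocks are scalar. For the reverse inclusion, however, it does not compute the centralizer. It cites Elduque's result \cite[Corollary~5.8]{Eld18} that the centralizer of a nonzero idempotent in any Okubo algebra over a field of characteristic $\neq 3$ is four-dimensional, and concludes by comparing dimensions. You instead obtain $\dim_\R C_{\widetilde{P}_8(\R)}(f) = 4$ directly. You use Remark~\ref{remark:commutativity}, together with your checks that $2\mu-1 \neq 0$ and that the product on $\widetilde{P}_8(\R)$ is the restriction of the one on $P_8(\C)$, to replace $*$-commutation by matrix commutation. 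Then you read off the block-diagonal form from $(af-fa)_{jk} = (f_{kk}-f_{jj})a_{jk}$. Your route is self-contained and elementary, and it makes explicit the passage from $*$-commutation to matrix commutation that the paper's inclusion step uses only tacitly. The paper's route is shorter and places the fact inside a general structural statement about idempotents in all Okubo algebras, at the cost of depending on an external theorem for what is here a direct $3\times 3$ computation.
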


\begin{proof}
Notice that the subalgebra $\widetilde{P}_4(\R)$ consists of block diagonal matrices with a $2\times 2$ upper block and a $1\times 1$ lower block. In particular, $f \in \widetilde{P}_4(\R)$, and since the corresponding blocks of the matrix $f$ are scalar matrices, we have $\widetilde{P}_4(\R) \subseteq C_{\widetilde{P}_8(\R)}(f)$.

On the other hand, the centralizer of a nonzero idempotent in an arbitrary Okubo algebra over a field~$\F$, $\chrs \F \neq 3$, is a four-dimensional subalgebra~\cite[Corollary 5.8]{Eld18}. Therefore, $C_{\widetilde{P}_8(\R)}(f) = \widetilde{P}_4(\R)$.
\end{proof}

\begin{remark} \label{remark:centralizer-of-diag-idempotent}
Since $f$ is the unique idempotent in $\widetilde{P}_8(\R)$ up to unitary conjugation, there are exactly three diagonal idempotents in $\widetilde{P}_8(\R)$, namely, $f$, $ufu^*$, and $vfv^*$, where
\begin{equation*}
u =
\begin{pmatrix}
    0 & 0 & 1 \\
    0 & 1 & 0 \\
    1 & 0 & 0
\end{pmatrix}, \quad
v =
\begin{pmatrix}
    1 & 0 & 0 \\
    0 & 0 & 1 \\
    0 & 1 & 0
\end{pmatrix}. 
\end{equation*}
It follows from Proposition~\ref{proposition:centre-of-p4} that their centralizers are $\widetilde{P}_4 = \widetilde{P}_4(\R)$, $L_4 = u\widetilde{P}_4(\R)u^*$, and $M_4 = v\widetilde{P}_4(\R)v^*$, respectively.
\end{remark}

In what follows it will be convenient to use the definition of a nonderogatory matrix, whose centralizer is well known.
	
\begin{definition}
A matrix $A \in M_n(\F)$ is called {\em nonderogatory} if its minimal and characteristic polynomials coincide.

In particular, if the characteristic polynomial splits into linear factors over~$\F$, then $A$ is nonderogatory if and only if each of its eigenvalues has geometric multiplicity~$1$.
\end{definition}
	
\begin{theorem}[{\cite[Theorem 3.2.4.2, Problem 3.2.P2]{Horn}}]
    \label{theorem:commutation-of-nonderogatory-matrix}
A matrix $a \in M_n(\F)$ is nonderogatory if and only if $C_{M_n(\F)}(a)$ coincides with the unital subalgebra generated by $a$.
\end{theorem}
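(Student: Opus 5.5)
We prove the two implications separately; write $\F[a]$ for the unital subalgebra generated by $a$ and $m_a$ for the minimal polynomial of $a$. In both directions we compare dimensions: $\dim \F[a] = \deg m_a$, and $\F[a] \subseteq C_{M_n(\F)}(a)$ always holds. So the claim is that $\dim C_{M_n(\F)}(a) = \deg m_a$ exactly when $\deg m_a = n$.

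\emph{Nonderogatory implies the centralizer is $\F[a]$.} If $m_a$ equals the characteristic polynomial, then $\F^n$ is a cyclic $\F[t]$-module: there is a vector $v$ with $v, av, \dots, a^{n-1}v$ a basis. Let $b$ commute with $a$. Write $bv = p(a)v$ for some polynomial $p$ of degree less than $n$. For every $k$,
\[
b a^k v = a^k b v = a^k p(a) v = p(a) a^k v .
\]
Hence $b$ and $p(a)$ agree on a basis, so $b = p(a) \in \F[a]$.

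\emph{The centralizer is $\F[a]$ implies nonderogatory.} Use the rational canonical form to write $\F^n \cong \bigoplus_{i=1}^r \F[t]/(d_i)$ with $d_1 \mid d_2 \mid \dots \mid d_r = m_a$. Suppose $a$ is derogatory, so $r \ge 2$. We construct a commuting matrix outside $\F[a]$. Take the $\F[t]$-module map that projects onto the summand $\F[t]/(d_1)$ and is zero on the other summands; call the resulting matrix $b$. Being a module map, $b$ commutes with $a$. Suppose $b = q(a)$. On the last summand $b$ acts as $0$, so $m_a = d_r$ divides $q$. Then $q(a) = 0$, which contradicts $b \neq 0$. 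Therefore $b \in C_{M_n(\F)}(a) \setminus \F[a]$.

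\emph{The split case in the definition.} When the characteristic polynomial splits, $a$ is nonderogatory if and only if each eigenvalue has a single Jordan block, that is, geometric multiplicity $1$. This follows because $m_a$ records the largest Jordan block for each eigenvalue, while the characteristic polynomial records the sum of the block sizes.

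The only step that needs care is the cyclic-vector existence for $m_a = \chi_a$ over an arbitrary field. This is the standard consequence of the invariant factor decomposition: $\chi_a = \prod_i d_i$, so $m_a = \chi_a$ forces $r = 1$. Since this is textbook material, in the paper it is enough to cite \cite{Horn}.
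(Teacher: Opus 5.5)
Your proof is correct, but it follows a different route from the paper, which gives no argument of its own and only cites Horn--Johnson. There the matrices are complex and the result sits among the consequences of the Jordan canonical form.

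Your argument is module-theoretic instead. A cyclic vector $v$ gives the forward direction, since any $b$ commuting with $a$ agrees with $p(a)$ on the basis $v, av, \dots, a^{n-1}v$. For the converse, the projection onto the invariant-factor summand $\F[t]/(d_1)$ commutes with $a$, is nonzero, and vanishes on $\F[t]/(d_r)$. So $b = q(a)$ would force $m_a = d_r \mid q$, hence $b = 0$. Your reduction of the cyclic-vector step to $\chi_a = \prod_i d_i$ with nonconstant $d_i$ is also right.

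What your route buys is independence of the ground field, and that is what the paper actually needs. The forward implication is applied in Proposition~\ref{proposition:centralizer-of-nonderogatory-matrix} to $M_3(\F)$ for any $\F$ with $\chrs \F \neq 3$ and $\omega \in \F$. Such a field need not be algebraically closed, e.g.\ $\F = \mathbb{Q}[\omega]$ as in Corollary~\ref{corollary:disconnected}.

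Using the complex, Jordan-form version in that setting needs an extra descent remark. First, a Jordan-form argument works verbatim over $\overline{\F}$. Second, both $\dim C_{M_n(\F)}(a) = \dim \ker(X \mapsto aX - Xa)$ and $m_a$ are unchanged by extending scalars. Hence both sides of the equivalence, namely $\dim C_{M_n(\F)}(a) = \deg m_a$ and $\deg m_a = n$, are unaffected by passing to $\overline{\F}$.

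The citation is shorter, and the Jordan approach gives the explicit upper triangular Toeplitz shape of commuting matrices when $\chi_a$ splits. Your proof is self-contained and covers the statement in the generality in which the paper states and uses it.
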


Some other equivalent conditions which characterize nonderogatory matrices over fields with at least $n$ elements are given in~\cite[Theorem 2.8]{DGKO13}. An analogue of Theorem~\ref{theorem:commutation-of-nonderogatory-matrix} can also be proved for nonderogatory matrices in the pseudo-octonion algebra.
	
\begin{proposition} \label{proposition:centralizer-of-nonderogatory-matrix}
Let $\chrs \F \neq 3$ and $\omega \in \F$. Consider a nonzero element $a \in P_8(\F)$. If the matrix $a$ is nonderogatory, then $C_{P_8(\F)} (a) = \spn \big\{ a, a*a \big\} = \alg{a}$, and the dimension of the centralizer equals~$2$.
\end{proposition}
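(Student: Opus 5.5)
The plan is to reduce the statement to Theorem~\ref{theorem:commutation-of-nonderogatory-matrix} via Remark~\ref{remark:commutativity}. That remark says that for $a,b\in P_8(\F)$ we have $a*b=b*a$ if and only if $ab=ba$ as matrices. Since $\omega\neq 1$, the factor $2\mu-1=(1-2\omega^2)/3$ is nonzero, so the remark is genuinely an equivalence. Hence
\[
C_{P_8(\F)}(a)=C_{M_3(\F)}(a)\cap \frsl_3(\F).
\]
As $a$ is nonderogatory, Theorem~\ref{theorem:commutation-of-nonderogatory-matrix} gives $C_{M_3(\F)}(a)=\spn\{I,a,a^2\}$. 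This space is $3$-dimensional, because the minimal polynomial of $a$ has degree~$3$.

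Next we intersect with the traceless matrices. The trace functional restricted to $\spn\{I,a,a^2\}$ is nonzero, since $\tr I=3\neq 0$ in characteristic $\neq 3$. So its kernel $C_{P_8(\F)}(a)$ has dimension exactly~$2$.

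Finally we identify this kernel with $\spn\{a,a*a\}$. Using~\eqref{equation:product}, $a*a= aa-\tfrac{\tr(a^2)}{3}I$, since $\mu+(1-\mu)=1$. Thus $a*a$ is traceless and lies in $\spn\{I,a,a^2\}$, and clearly $a$ does too. The elements $a$ and $a*a$ are linearly independent: otherwise $a^2\in\spn\{I,a\}$, contradicting that the minimal polynomial of $a$ has degree~$3$. Since $a\neq 0$ is traceless, it is not scalar, so $I$ and $a$ are independent and this argument applies. Therefore $\spn\{a,a*a\}$ is a $2$-dimensional subspace of the $2$-dimensional space $C_{P_8(\F)}(a)$, so the two coincide. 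That this span equals $\alg{a}$ follows from the remark after~\eqref{equation:xxxx}.

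There is no serious obstacle here. The only points needing care are the nonvanishing of $2\mu-1$ and of $\tr I$ in the allowed characteristics (including characteristic~$2$). One should also note that the definition of nonderogatory used is the minimal-equals-characteristic-polynomial one, which is exactly the hypothesis of Theorem~\ref{theorem:commutation-of-nonderogatory-matrix} over an arbitrary field.
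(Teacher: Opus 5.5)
Your proof is correct and is essentially the paper's argument: you intersect $C_{M_3(\F)}(a)=\spn\{I,a,aa\}$ with the traceless matrices via Remark~\ref{remark:commutativity}, and you spell out the dimension count and the independence of $a$ and $a*a$ in more detail than the paper does. One harmless slip: $2\mu-1=-(1+2\omega^2)/3=\omega(1-\omega)/3$, not $(1-2\omega^2)/3$ (your expression vanishes in characteristic $7$ with $\omega=2$); the correct value is nonzero precisely because $\omega\neq 0,1$, so your conclusion stands.
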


\begin{proof}
By Theorem~\ref{theorem:commutation-of-nonderogatory-matrix}, we have
\(
    C_{M_3(\F)} (a) = \spn \{ I, a, aa \},
\)
and its dimension equals $3$. Then Remark~\ref{remark:commutativity} implies that
\[
    C_{P_8(\F)} (a) = \{ b \in C_{M_3 (\F)}(a) \; | \; b \in P_8(\F) \} = \{ b \in C_{M_3 (\F)}(a) \; | \; \tr(b) = 0 \}.
\]
Since $a*a = aa - \tr(aa)/3 \cdot I$ and $\tr(a) = \tr(a \ast a) = 0$, the desired statement follows immediately.
\end{proof}

\begin{remark} \label{remark:centralizer-of-nonderogatory-matrix}
Similarly, one can show that if a matrix $a \in \widetilde{P}_8(\R)$ is nonderogatory, then $C_{\widetilde{P}_8(\R)} (a) = \spn_{\R} \{ a, a*a \} = \alg{a}$, since \[C_{\widetilde{P}_8(\R)}(a) =  \{ b \in C_{P_8(\C)}(a) \; | \; b^* = b \}. \]
\end{remark}

The next proposition shows that the set of derogatory matrices in $\widetilde{P}_8(\R)$ coincides with the union of the lines passing through nonzero idempotents.

\begin{proposition} \label{proposition:nonderogatory-matrix-criterion}
Let $x \in \widetilde{P}_8(\R)$, $x \neq 0$. The set $ \F x$ contains a nonzero idempotent if and only if $x$ is a derogatory matrix.
\end{proposition}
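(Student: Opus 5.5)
We work directly with the Hermitian structure. Let $x \in \widetilde{P}_8(\R)$ be nonzero. Since $x$ is Hermitian, it is unitarily diagonalizable with real eigenvalues $\lambda_1,\lambda_2,\lambda_3$ satisfying $\lambda_1+\lambda_2+\lambda_3=0$. Its minimal polynomial is therefore the product of $(t-\lambda)$ over the \emph{distinct} eigenvalues. Hence $x$ is derogatory if and only if two eigenvalues coincide.

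\textbf{Derogatory implies an idempotent on the line.} Suppose $x$ is derogatory. Since $x\neq 0$ and the trace is zero, the eigenvalues cannot all be equal. So after unitary conjugation $x = \operatorname{diag}(\alpha,\alpha,-2\alpha)$ with $\alpha\neq 0$, possibly after reordering by a permutation matrix such as $u$ or $v$ from Remark~\ref{remark:centralizer-of-diag-idempotent}. Thus $x = -\alpha\, w f w^*$ for some unitary $w$, where $f$ is the idempotent~\eqref{equation:idempotent}.

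Unitary conjugation preserves both the matrix product and the trace. By~\eqref{equation:product} it is therefore an automorphism of $\widetilde{P}_8(\R)$, and $wfw^*$ is a nonzero idempotent in $\R x$. One can also check this directly: $f*f = ff - \tr(ff)/3\cdot I = \operatorname{diag}(1,1,4) - 2I = f$.

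\textbf{An idempotent on the line implies derogatory.} Suppose $\R x$ contains a nonzero idempotent $e = cx$ with $c\neq0$. By Proposition~\ref{proposition:uniqe-idempotent}, $e = wfw^*$ for some unitary $w$, so $e$ has eigenvalues $-1,-1,2$. Then $x = c^{-1}e$ has a repeated eigenvalue $-c^{-1}$, and so it is derogatory.

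The direction resting on Proposition~\ref{proposition:uniqe-idempotent} is the only nontrivial input. That proposition is already available, so no step is a real obstacle. If one prefers to avoid it, the same conclusion follows by diagonalizing $e = \operatorname{diag}(d_1,d_2,d_3)$ unitarily and solving $d_i^2 - \tfrac13\sum_j d_j^2 = d_i$ with $\sum_i d_i = 0$. This shows each $d_i$ is a root of the same quadratic, so two of them coincide. The main care point is using the characteristic-free description of nonderogatory for Hermitian matrices: the characteristic polynomial splits over $\R$, so the splitting criterion in the definition applies.
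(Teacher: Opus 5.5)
Your proof is correct and matches the paper's argument. Both directions reduce to the fact that a nonzero traceless Hermitian matrix with a repeated eigenvalue has spectrum $(-\lambda,-\lambda,2\lambda)$. One direction uses Proposition~\ref{proposition:uniqe-idempotent}, and the other rescales $x$ to a unitary conjugate of $f$. You also make explicit that unitary conjugation is an automorphism of $\widetilde{P}_8(\R)$, which the paper uses tacitly when asserting that $x/\lambda$ is idempotent, and your quadratic-equation remark gives a self-contained substitute for Proposition~\ref{proposition:uniqe-idempotent}.
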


\begin{proof}
If the set $\F x$ contains a nonzero idempotent $g$, then, by Proposition~\ref{proposition:uniqe-idempotent}, the element $x$ has an eigenvalue with geometric multiplicity~$2$. Hence the matrix $x$ is derogatory.

Conversely, if $x$ is a derogatory matrix, then $x$ has an eigenvalue with geometric multiplicity~$2$. Since $x$ is Hermitian and $\tr(x) = 0$, its eigenvalues are $(-\lambda,-\lambda,2\lambda)$ for some $0 \neq \lambda \in \R$. It follows that~$x/\lambda$ is an idempotent.
\end{proof}

As shown above, idempotents have centralizers of the largest dimension in $\widetilde{P}_8(\R)$. We now prove that any vertex of the commuting graph is adjacent to an idempotent.
    
\begin{lemma} \label{lemma:path-to-idempotent}
For any nonzero element $x \in \widetilde{P}_8(\R)$ there exists a nonzero idempotent $g \in \widetilde{P}_8(\R)$ such that $[x] \leftrightarrow [g]$ in $\Gamma_C(\widetilde{P}_8(\R))$.
\end{lemma}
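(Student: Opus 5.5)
Every nonzero $x \in \widetilde{P}_8(\R)$ is a traceless Hermitian matrix, so the plan is to diagonalize it and read off an idempotent from the resulting diagonal form. By the spectral theorem there is a unitary $w \in M_3(\C)$ such that
\[
x = w\,d\,w^*, \qquad d = \operatorname{diag}(\alpha,\beta,\gamma), \quad \alpha,\beta,\gamma \in \R, \quad \alpha+\beta+\gamma = 0 .
\]
Conjugation by a unitary matrix preserves $\widetilde{P}_8(\R)$. It is also an automorphism of the product~\eqref{equation:product}, because it preserves both the matrix product and the trace. Hence it preserves the commutation relation, and it maps idempotents to idempotents. It therefore suffices to find a nonzero idempotent commuting with the diagonal matrix $d$. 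For such an idempotent $g_0$, the element $g = w g_0 w^*$ is a nonzero idempotent of $\widetilde{P}_8(\R)$ with $[x] \leftrightarrow [g]$.

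For the diagonal case I take $g_0 = f$ from~\eqref{equation:idempotent}. It is a nonzero idempotent by Proposition~\ref{proposition:uniqe-idempotent}, and it lies in $\widetilde{P}_4(\R)$, which consists of block diagonal matrices. The diagonal matrix $d$ commutes with $f$ with respect to the ordinary matrix product, since both are diagonal. By Remark~\ref{remark:commutativity}, they then also commute with respect to~$*$. Equivalently, $d \in \widetilde{P}_4(\R) = C_{\widetilde{P}_8(\R)}(f)$ by Proposition~\ref{proposition:centre-of-p4}, since $d$ is a real diagonal traceless matrix, i.e.\ a linear combination of $e_3$ and $e_8$. Any of the three diagonal idempotents of Remark~\ref{remark:centralizer-of-diag-idempotent} would work equally well.

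Finally, the commuting graph has no loops, so the case $[x] = [g]$ has to be addressed. This happens exactly when $\R x$ contains a nonzero idempotent, i.e.\ when $x$ is derogatory by Proposition~\ref{proposition:nonderogatory-matrix-criterion}. Here the statement holds trivially, since $\leftrightarrow$ is defined to mean ``coincide or are adjacent''. If a genuine edge is wanted, one can replace $f$ by another diagonal idempotent $ufu^*$ that is not proportional to $d$. This is possible because the three diagonal idempotents are pairwise non-proportional.

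There is essentially no obstacle here. The only point needing care is that the remark about $*$-commutation being equivalent to matrix commutation is stated for $P_8(\F)$. This transfers to $\widetilde{P}_8(\R) \subset P_8(\C)$, since the product on $\widetilde{P}_8(\R)$ is the restriction of the one on $P_8(\C)$.
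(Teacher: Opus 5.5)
Your proof is correct and is essentially the paper's argument: you unitarily diagonalize $x = w d w^*$, observe that the diagonal matrix $d$ commutes with the diagonal idempotent $f$, and take $g = w f w^*$. You also spell out two points the paper leaves implicit: that unitary conjugation is an automorphism of $\widetilde{P}_8(\R)$, and how to avoid a loop. The loop discussion is harmless but not needed, since $\leftrightarrow$ already allows the two vertices to coincide.
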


\begin{proof}
Since the matrix $x$ is Hermitian, there exists a unitary matrix $u \in M_3(\C)$ such that $x = udu^*$, where $d$ is a diagonal matrix. Let $g = ufu^*$. Then $g$ is an idempotent, and $[d] \leftrightarrow [f]$ implies $[x] \leftrightarrow [g]$.
\end{proof}

Combining Lemma~\ref{lemma:path-to-idempotent} with the results of the previous section, which show that the distance between any two idempotents is at most two, we obtain the main statement on the commuting graph of the real pseudo-octonions.
		
\begin{theorem} \label{theorem:real-commutativity-graph}
The graph $\Gamma_C(\widetilde{P}_8(\R))$ is connected, and its diameter equals~$4$.
\end{theorem}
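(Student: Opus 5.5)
The upper bound is straightforward. Take any two vertices $[a],[b]$ of $\Gamma_C(\widetilde{P}_8(\R))$. By Lemma~\ref{lemma:path-to-idempotent} there are nonzero idempotents $g,h$ with $[a]\leftrightarrow[g]$ and $[h]\leftrightarrow[b]$. By Corollary~\ref{corollary:idempotents-distance-two}, $\D([g],[h])\le 2$. Hence
\[
\D([a],[b]) \le 1 + 2 + 1 = 4 .
\]
(Remark~\ref{remark:anisotropic} shows that for $g\neq h$ the connecting vertex is $x=g*h+h*g-g-h\neq 0$, since the norm is anisotropic.) This proves connectivity and gives $\diam\le 4$.

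For the lower bound I need two vertices at distance exactly $4$. I will choose $a,b$ nonderogatory, so that by Remark~\ref{remark:centralizer-of-nonderogatory-matrix} their centralizers are the planes $\alg{a}$ and $\alg{b}$. Note that $\alg{a}$ consists of the traceless Hermitian matrices that are polynomials in $a$, i.e.\ diagonal in the eigenbasis of $a$. Its projective line of vertices therefore contains exactly three derogatory directions, namely the three idempotents diagonal in that basis; all other points are nonderogatory with the same centralizer $\alg{a}$.

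Next I analyse a hypothetical path $a \leftrightarrow c_1 \leftrightarrow c_2 \leftrightarrow c_3 \leftrightarrow b$ of length at most $3$.
\begin{itemize}
\item If $c_1$ is nonderogatory, then $C(c_1)=\alg{a}$, and $c_1$ contributes nothing beyond $\alg{a}$. So the path can be shortened, and we may assume $c_1$ and $c_3$ are idempotents that are diagonal in the eigenbasis $\{u_i\}$ of $a$, respectively in the eigenbasis $\{v_j\}$ of $b$.
\item An idempotent $c_1$ of this kind has eigenspaces $\C u_i$ and $u_i^\perp$. Its centralizer $\widetilde{P}_4$ (by Proposition~\ref{proposition:centre-of-p4} and conjugation) is the set of Hermitian matrices preserving this splitting.
\item A path of length $\le 3$ would then require either that some $C(c_1)$ meets $C(c_3)$ nontrivially, or that $c_1$ and $c_3$ commute.
\item Two idempotents with lines $\C u_i$ and $\C v_j$ commute iff $u_i$ and $v_j$ are parallel or orthogonal.
\item A nonzero traceless Hermitian $g$ commuting with both lies in the common centralizer. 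For generic lines $\C u_i,\C v_j$ such a $g$ still exists: the condition is that $g$ preserves the two lines, and one can take $g$ with the $2$-dimensional eigenspace containing $u_i$ and $v_j$. That is exactly the length-$2$ phenomenon of Corollary~\ref{corollary:idempotents-distance-two}.
\end{itemize}
So $\D(a,b)=4$ requires that no $C(c_1)$ and $C(c_3)$ meet through a vertex that commutes with both, beyond the forced idempotent bridges. More precisely, $\D\le 3$ holds iff some eigenvector $u_i$ of $a$ and some eigenvector $v_j$ of $b$ are parallel or orthogonal, or some element commutes with both $a$ and a neighbour.

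The plan is then to take eigenbases $\{u_i\}$ and $\{v_j\}$ with all $|\langle u_i,v_j\rangle|$ strictly between $0$ and $1$, for example a generic unitary change of basis, and to verify that no path of length $\le 3$ exists. The main obstacle is this case analysis of length-$3$ paths $a - c_1 - c_2 - b$. Here $c_1$ and $c_2$ must be idempotents adapted to $a$ and $b$ respectively, since nonderogatory intermediate vertices collapse. Commuting idempotents $c_1,c_2$ (rank-one lines $\C u_i,\C v_j$) commute iff the lines are equal or orthogonal, which genericity excludes. I would first check that the commutation criterion for two such idempotents is exactly this, via their $\pm$ eigenspace decomposition. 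Alternatively, one could invoke the analogous result of~\cite{Shitov} for $M_3(\R)$-type Hermitian settings, but the direct argument seems cleaner.
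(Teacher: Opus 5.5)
Your proposal is correct and follows the paper's skeleton. The upper bound is identical to the paper's. For the lower bound you use, as the paper does, three facts: a nonderogatory $a$ has $C(a)=\alg{a}$; every nonderogatory point of $\alg{a}$ has the same centralizer; hence a shortest path must leave $\alg{a}$ through one of its three idempotent directions $3u_iu_i^*-I$ and enter $\alg{b}$ through one of the $3v_jv_j^*-I$.

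The only real difference is the final check. The paper fixes the pair $x$, $e_3$ and verifies by hand that $\alg{x}$ meets none of $\widetilde{P}_4$, $L_4$, $M_4$. You instead use the criterion that $3uu^*-I$ and $3vv^*-I$ commute if and only if $u\parallel v$ or $u\perp v$; this follows at once from $[uu^*,vv^*]=(u^*v)\,uv^*-\overline{(u^*v)}\,vu^*$. Your route gives a characterization: $\D([a],[b])=4$ exactly when $a$ and $b$ are nonderogatory and no eigenvector of one is parallel or orthogonal to an eigenvector of the other. The paper's route gives an explicit witness without needing that lemma.

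To make your argument a complete proof, three repairs are needed.
\begin{enumerate}
\item Replace ``generic'' by a concrete pair. The paper's $x,e_3$ qualifies: the eigenvectors of $x$ are multiples of $(1,1,1)$, $(1,\omega,\omega^2)$, $(1,\omega^2,\omega)$, and their normalized inner products with the standard basis vectors all have modulus $1/\sqrt{3}$.
\item Rule out distances $1$ and $2$ explicitly. A common neighbour is either nonderogatory, which forces $\alg{a}=\alg{b}$, or a common idempotent, which forces some $u_i\parallel v_j$.
\item Drop the bullets about $C(c_1)\cap C(c_3)$. They concern paths of length $4$, not $3$. Moreover, by Corollary~\ref{corollary:idempotents-distance-two} that intersection is never zero, so your sentence saying that $\D(a,b)=4$ requires these centralizers not to meet is false as written.
\end{enumerate}
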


\begin{proof}
It follows from Lemmas~\ref{lemma:length-two-path-idempotents} and~\ref{lemma:path-to-idempotent} and Remark~\ref{remark:anisotropic} that $\diam(\Gamma_C(\widetilde{P}_8(\R))) \leq 4$.
On the other hand, consider the matrices
\begin{equation*}
    x = 
    \begin{pmatrix}
        0 & i & -i \\
        -i & 0 & i \\
        i & -i & 0
    \end{pmatrix}, \quad
    y = - x \ast x = 
    \begin{pmatrix}
        0 & 1 & 1 \\
        1 & 0 & 1 \\
        1 & 1 & 0
    \end{pmatrix}.
\end{equation*}
Then the elements $x$ and $y = -x*x$ are linearly independent. Consider the two-dimensional subalgebra $\alg{x} = \spn \{ x, y \}$ generated by the element $x$. It can be easily seen that
\[
\alg{x} \cap (\widetilde{P}_4 \cup L_4 \cup M_4) = \{ 0 \},
\]
where the subalgebras $L_4$ and $M_4$ are defined in Remark~\ref{remark:centralizer-of-diag-idempotent}.

By Proposition~\ref{proposition:quadratic-subalgebra}, if nonzero elements $a$ and $a'$ are not proportional to idempotents, then the subalgebras $\alg{a}$ and $\alg{a'}$ are two-dimensional. Then $a' \in \alg{a}$ implies $\alg{a'} = \alg{a}$. Furthermore, by Remark~\ref{remark:centralizer-of-nonderogatory-matrix} and Proposition~\ref{proposition:nonderogatory-matrix-criterion}, we have $C_{\widetilde{P}_8(\R)} (a) = \alg{a}$. Thus any path from $[a]$ to $[b]$, where $b \in \widetilde{P}_8(\R) \setminus \alg{a}$, must contain a vertex of the form $[g]$, where $g \in \alg{a}$ is an idempotent.

Therefore, a path from $[x]$ to $[e_3]$ must contain vertices $[g]$ and $[g']$, where $g \in \alg{x}$ and $g' \in \alg{e_3}$ are idempotents. 
Since $g'$ is a diagonal idempotent, it follows from Remark~\ref{remark:centralizer-of-diag-idempotent} that $C_{\widetilde{P}_8(\R)} (g') \subseteq \widetilde{P}_4 \cup M_4 \cup L_4$, and hence $g \centernot\leftrightarrow g'$, so $\D([x], [e_3]) \geq 4$. Consequently, $\D([x], [e_3]) = 4$ and $\diam(\Gamma_C(\widetilde{P}_8(\R))) = 4$.
\end{proof}

\begin{remark}
It follows from the proof of Theorem~\ref{theorem:real-commutativity-graph} that the maximum distance can be reached only between those vertices of $\Gamma_C(\widetilde{P}_8(\R))$ which are not proportional to idempotents. In other words, if $\D([x], [y]) = 4$ in $\Gamma_C(\widetilde{P}_8(\R))$, then $\F x$ and $\F y$ do not contain nonzero idempotents, i.e., the matrices $x$ and $y$ are nonderogatory, see Proposition~\ref{proposition:nonderogatory-matrix-criterion}. This statement is similar to Theorem~1.1 of~\cite{DKO12}, although being its weaker version. According to the above-mentioned theorem, if $n \geq 3$ and the field~$\F$ is algebraically closed, then a matrix $A \in M_n(\F)$ is  nonderogatory if and only if there exists a matrix $X \in M_n(\F)$ such that $\D(A,X) = 4$ in $\Gamma_C(M_n(\F))$. 
\end{remark}

\medskip

The authors are grateful to Professor Alexander E. Guterman for posing the problem, careful attention to the work, and fruitful discussions.


\begin{thebibliography}{99}            
            \bibitem{Akbari2}            
            S. Akbari, H. Bidkhori, A. Mohammadian, \textit{Commuting graphs of matrix algebras.} --- Comm. Algebra \textbf{36}, No. 11 (2008), 4020--4031. DOI: 10.1080/00927870802174538

			\bibitem{Akbari}
			S. Akbari, A. Mohammadian, H. Radjavi, P. Raja, \emph{On the diameters of commuting graphs.} --- Linear Algebra Appl. \textbf{418}, No. 1 (2006), 161--176. DOI: 10.1016/j.laa.2006.01.029

            \bibitem{Akbari3}
            S. Akbari, P. Raja, \emph{Commuting graphs of some subsets in simple rings.} --- Linear Algebra Appl. \textbf{416}, Nos. 2–3 (2006), 1038--1047. DOI: 10.1016/j.laa.2006.01.006
			
            \bibitem{DGKO13}
            G. Dolinar, A. Guterman, B. Kuzma, P. Oblak, \emph{Extremal matrix centralizers.} --- Linear Algebra Appl. \textbf{438}, No. 7 (2013), 2904--2910. DOI: 10.1016/j.laa.2012.12.010
			
			\bibitem{DKO12}
            G. Dolinar, B. Kuzma, P. Oblak, \emph{On maximal distances in a commuting graph.} --- 	Electron. J. Linear Algebra \textbf{23}, No. 1 (2012), 243--256. DOI:10.13001/1081-3810.1518 
            
            \bibitem{Eld97}
			A. Elduque, {\em Symmetric composition algebras.} --- J. Algebra \textbf{196}, No. 1 (1997), 282--300. DOI: 10.1006/jabr.1997.7071
			
			\bibitem{Eld99}
			A. Elduque, {\em Okubo algebras and twisted polynomials.} --- 
            In: Recent Progress in Algebra (Taejon/Seoul, 1997), 
            Contemp. Math. \textbf{224} (1999), 101--109. 
            Providence, RI: Amer. Math. Soc.
            DOI: 10.1090/conm/224
			
			\bibitem{Eld15}
			A. Elduque, {\em Okubo algebras: automorphisms, derivations and idempotents.} ---
            In: Lie algebras and related topics, 
            Contemp. Math. \textbf{652} (2015), 61--73. 
            Providence, RI: Amer. Math. Soc.
            DOI: 10.1090/conm/652/12953

            \bibitem{Eld18}
            A. Elduque, {\em Order $3$ elements in $G_2$ and idempotents in symmetric composition algebras.} --- Canad. J. Math. \textbf{70}, No. 5 (2018), 1038--1075. DOI: 10.4153/CJM-2017-039-6
            
                \bibitem{Eld23}
            A. Elduque, {\em Okubo algebras with isotropic norm.} --- Springer Proc. Math. Stat. \textbf{427} (2023), 287--301. 
            DOI: 10.1007/978-3-031-32707-0\_18
			
			\bibitem{EM91}
			A. Elduque, H.~Ch. Myung, {\em Flexible composition algebras and Okubo algebras.} --- Comm. Algebra \textbf{19}, No. 4 (1991), 1197--1227. DOI: 10.1080/00927879108824198
			
			\bibitem{EM93}
			A. Elduque, H.~Ch. Myung, {\em On flexible composition algebras.} --- Comm. Algebra \textbf{21}, No. 7 (1993), 2481--2505. DOI: 10.1080/00927879308824688
			
			\bibitem{EP96}
			A. Elduque, J.~M. P\'erez, {\em Composition algebras with associative bilinear form.} --- Comm. Algebra \textbf{24}, No. 3 (1996), 1091--1116. DOI: 10.1080/00927879608825625
			
			\bibitem{Fau88}
			J.~R. Faulkner, {\em Finding octonion algebras in associative algebras.} --- Proc. Amer. Math. Soc. \textbf{104}, No. 4 (1988), 1027--1030. DOI: 10.2307/2047585 
			
			\bibitem{Gell-Mann}
			M. Gell-Mann, {\em Symmetries of baryons and mesons. } --- Phys. Rev. \textbf{125}, No. 3 (1962), 1067–1084. DOI: 10.1103/physrev.125.1067
			
			\bibitem{Horn}
            R. A. Horn, C. R. Johnson, \emph{Matrix Analysis}, Second ed. --- Cambridge, UK: Cambridge University Press (2012). DOI: 10.1017/CBO9781139020411
			
			\bibitem{KMRT98}
			M.-A. Knus, A. Merkurjev, M. Rost, J.-P. Tignol, \emph{The Book of Involutions.} --- Amer. Math. Soc. Colloq. Publ. \textbf{44} (1998). Providence, RI: Amer. Math. Soc. DOI: 10.1090/coll/044

            \bibitem{Morrison}
            R. Morrison, \emph{Commuting graphs of $p$-adic matrices.} --- arXiv:2407.13848 (2024).
			
			\bibitem{Oku78}
			S. Okubo, {\em Pseudo-quaternion and pseudo-octonion algebras.} --- Hadronic J. \textbf{1}, No. 4 (1978), 1250--1278.
			
			\bibitem{Oku78D}
            S. Okubo, {\em Deformation of the Lie-admissible pseudo-octonion algebra into the octonion algebra.} --- Hadronic J. \textbf{1}, No. 5 (1978), 1383--1431
			
			\bibitem{Oku95}
			S. Okubo, \emph{Introduction to octonion and other non-associative algebras in physics.} --- Cambridge, UK: Cambridge University Press (1995). DOI: 10.1017/CBO9780511524479
			
			\bibitem{OO81b}
			S. Okubo, J.~M. Osborn, {\em Algebras with nondegenerate associative symmetric bilinear forms permitting composition. II.} --- Comm. Algebra. \textbf{9}, No. 20 (1981), 2015--2073. DOI: 10.1080/00927878108822695
			
			\bibitem{Pet69}
			H.-P. Petersson, {\em Eine Identit\"at f\"unften Grades, der gewisse Isotope von Kompositions-Algebren gen\"ugen.} --- Math. Z. \textbf{109} (1969), 217–238.

            \bibitem{Shitov16}
            Ya. N. Shitov, \emph{A matrix ring with commuting graph of maximal diameter.} --- J.~Combin. Theory, Ser.~A \textbf{141} (2016), 127--135. DOI: 10.1016/j.jcta.2016.02.008.
            
            \bibitem{Shitov}
            Ya. N. Shitov, {\em Distances on the commuting graph of the ring of real matrices.} --- Matem. Zametki \textbf{103}, No. 5 (2018), 765–768; English transl., Math. Notes \textbf{103}(5) (2018), 832–835. DOI: 10.1134/S0001434618050152

		\end{thebibliography}
	\end{document}